    \tikzset{st/.style={shape=circle, draw, line width=1mm, inner sep=1mm, }, }
\numberwithin{equation}{section} 
\newcommand{\mmm}{\setminus}
\newcommand{\rr}{\mathbb{R}}
\newcommand{\nn}{\mathbb{N}}
\newcommand{\zz}{\mathbb{Z}}
\newcommand{\eS}{{\EuScript S}}
\newcommand{\eK}{{\EuScript K}}
\newcommand{\eP}{{\EuScript P}}
\newcommand{\al}{\alpha}
\newcommand{\be}{\beta}
\newcommand{\ga}{\gamma}
\newcommand{\Ga}{\Gamma}
\newcommand{\td}{\widetilde d}
\newcommand{\IN}{\subset}
\newcommand{\NI}{\supset}
\newcommand{\0}{\varnothing}
\newcommand{\8}{\infty}
\newcommand{\bj}{{\bf {j}}}
\newcommand{\bi}{{\bf {i}}}
\newcommand{\bk}{{\bf {k}}}
\renewcommand{\leq}{\leqslant}
\renewcommand{\geq}{\geqslant}
\newcommand{\dd}{{\partial}}
\newcommand{\bma}{{\bm{\alpha}}}
\newcommand{\bmb}{{\bm{\beta}}}
\newtheorem {theorem}               {Theorem}    [section]
\newtheorem {proposition} [theorem] {Proposition}
\newtheorem {lemma}       [theorem] {Lemma}
\newtheorem {corollary}   [theorem] {Corollary}
\newtheorem {definition}  [theorem]          {Definition} 
\newtheorem {remark}                {Remark}
\begin{document}
\title{On the classification of fractal square dendrites}


\author{Dmitry Drozdov}
\address{Novosibirsk State University, Novosibirsk, Russia}
\email{dimalek97@yandex.ru}


\author{Andrei Tetenov}
\address{Sobolev Institute of Mathematics,Novosibirsk, Russia},
\email{a.tetenov@gmail.com}



\begin{abstract}
 We consider the classification of fractal square dendrites $K$ based on the types of the self-similar boundary $\partial K$ and the main tree $\gamma$ of such dendrites.
We show that the self-similar boundary   of a  fractal square dendrite $K$  may be of 5 possible types and may consist of 3,4 or 6 points.
 We prove that the main trees of   fractal square dendrites  belong to 7 possible classes.
Bearing in mind the placement  and orders of the points of $\partial  K$ with respect to the main tree $\gamma$,  this results in 16 possible types of main trees for non-degenerate fractal square dendrites.\\

Keywords: fractal square, dendrite, self-similar boundary, main tree, ramification point\\

MSC {28A80}
\end{abstract}

\maketitle

\section{Introduction}

Fractal squares may be considered a special self-similar case of self-affine Bedford-McMullen carpets \cite{bedf1984, McMul}.
The later ones were defined in 1984 but the first examples of fractal squares such as Sierpinski gasket, Sierpinski carpet, and Viczek fractal appeared long before that time and are well known. 

Bedford-McMullen carpets and Sierpinski sponges were intensely studied in recent decades, and this study was concentrated mainly on dimension problems \cite{elekes2010self, fraser-bmc, Olsen1998, peres1994}.
Fractal squares are simpler and more convenient. They are  the self-similar sets that satisfy the open set condition. 
The Hausdorff dimension of a fractal square is equal to its similarity dimension and box dimension, and each fractal square has finite positive measure.

Since 2008, L.~L.~Cristea and B.~Steinsky \cite{cristea2009, cristea2011, cristea2017, cristea2020} studied labyrinth fractals, a class of fractal squares that were dendrites. 
Moreover, this class admitted a composition operation, which allowed to consider random labyrinth fractals.

Topological structure of fractal squares was studied by K.-S.~Lau, J.~J.~Luo and H.~Rao  \cite{LLR2013}.
They subdivided the fractal squares into $3$ types, depending on the topology of the complement $H=\rr^2\mmm (\zz^2+K)$ of a given fractal square $K\IN [0,1]^2$ in the plane $\rr^2$. 
As it follows from this work, if $H$ has a unique unbounded connected component, then $K$ is totally disconnected, if $H$ contains infinitely many unbounded components, then all components of $K$ are parallel line segments or one-point components, and if $H$ has no unbounded components, then $K$ has a connected component, different from a point or a line segment.

Recently Xiao \cite{Xiao2021} gave a complete characterization  of fractal squares that have a finite number of connected components.

The central result of the paper is the classification of fractal square dendrites by the type of their main trees (Theorem \ref{thm:7trees}). Each fractal square dendrite $K$ is post-critically finite and therefore it has a finite self-similar boundary $\dd K$. The minimal subdendrite $\ga\IN K$, which contains 
$\dd K$ is called the main tree of the dendrite $K$ (Definition \ref{mtree}). Our Theorem \ref{thm:7trees} states that the main trees of fractal square dendrites belong to 7 types.

In Section 3 we consider the intersections of pieces of a fractal square and find the finiteness conditions for  such intersections. In Section 4 we prove that there are 5 boundary types for fractal square dendrites. In Section 5 we find which combinations of pieces are forbidden and which ramification orders are permitted for each boundary type, which yields the main result in Section 6.

\section{ Preliminaries}

\subsection{ Self-similar sets\\}

\begin{definition}
Let $\eS=\{S_1, \ldots, S_m\}$ be a system of  contraction similarities in  $\mathbb{R}^n$.  
A non-empty compact set $K$ satisfying the equation  $K=\bigcup\limits_{i=1}^m S_i(K)$ is called {\em the attractor of the system} $\eS$ or the set that is self-similar with respect to the system $\eS$.
\end{definition}

We denote by  $I=\{1,\ \ldots,\ m\}$  the set of indices of the system $\eS$, then $I^{*}=\bigcup\limits_{n=1}^\infty I^n$ denotes the set of all words $\bm{i}=i_1\ldots i_n$ of finite length in the alphabet $I$, called {\em multi-indices}.
We will use the notation $S_{\bj}=S_{j_{1}j_{2}\ldots j_{n}}=S_{j_{1}}S_{j_{2}} \ldots S_{j_{n}}$, and  denote the $S_{\bj}(K)$ by $K_{\bj}$.
The sets $K_\bj$,  where $\bj=j_{1}j_{2}\ldots j_{n}$,  are called {\em the pieces of  order $n$} of the set $K$.

The set of all  infinite strings (or addresses)  $I^\infty=\left\{\bm\beta = \beta_1\beta_2\ldots, \; \beta_i\in{I} \right\}$ is called {\em the index space} of the system $\eS$.
The mapping $\pi:I^\8\to K$ that maps a sequence $\bm\beta\in I^\8$ to the point $x=\bigcup\limits_{n=1}^\8 K_{\beta_1\ldots\beta_n}$ is called the {\em address map} for the attractor $K$.
Then for each $x\in K$, the set $\pi^{-1}(x)$ is a set of addresses of the point $x$.

\begin{definition}{\rm \cite[Def. 1.3.4]{kig}}\label{def:bd} 
{\em  A critical set} of the attractor $K$ of the system $\eS$ is the set $C=\{x:\; x\in S_i(K)\cap S_j(K),\; S_i, S_j\in\eS\}$ of points of pairwise intersections of pieces $S_i(K)$ of $K$.
The set $\dd K$ of all $x\in K$ such that for some $\bj\in I^*$, $S_\bj(x)\in C$ is called the {\em self-similar boundary} of the set $K$.
Its complement in $K$ we denote by $\dot K$.
\end{definition}

\begin{definition}
We denote by $\tilde{\Gamma}(\eS)$ the  {\em ordinary  intersection graph} for $K(\eS)$  which is is a graph with a vertex set $\{K_i:\; i\in I\}$, and a pair of vertices $K_i,K_j$  is connected by an edge if $K_i\cap K_j\neq\0$.
\end{definition}

\begin{theorem} [\cite{hata85,kig}]
The attractor $K$ of a system $\eS$ is connected iff its ordinary intersection graph $\tilde{\Gamma}(\eS)$ is connected.\\
If $K$ is connected, it is locally connected.
\end{theorem}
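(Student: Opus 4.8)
The plan is to reduce everything to the connectedness of the higher-order intersection graphs. For each $n\geq 1$ let $\tilde\Gamma_n(\eS)$ denote the graph whose vertices are the pieces $K_\bj$ with $|\bj|=n$, two vertices $K_\bi,K_\bj$ being joined by an edge exactly when $K_\bi\cap K_\bj\neq\0$; thus $\tilde\Gamma_1(\eS)=\tilde\Gamma(\eS)$. The heart of the proof is the claim that \emph{if $\tilde\Gamma(\eS)$ is connected, then $\tilde\Gamma_n(\eS)$ is connected for every $n$}. Granting this, both the equivalence and local connectedness follow by compactness bookkeeping.

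First I would prove the claim by induction on $n$, the base $n=1$ being the hypothesis. For the inductive step I use two facts about pieces of order $n+1$. Inside a fixed piece $K_\bj$ with $|\bj|=n$, the sub-pieces $\{K_{\bj i}:i\in I\}=\{S_\bj(K_i):i\in I\}$ carry the same adjacency pattern as $\tilde\Gamma(\eS)$, since $S_\bj$ is injective and hence $K_{\bj i}\cap K_{\bj i'}=S_\bj(K_i\cap K_{i'})$; so each such ``cluster'' of sub-pieces is connected within $\tilde\Gamma_{n+1}(\eS)$. Moreover, if $K_\bi\cap K_\bj\neq\0$, then any common point $z$ lies in some $K_{\bi k}$ and in some $K_{\bj l}$, so $K_{\bi k}\cap K_{\bj l}\neq\0$; hence the clusters sitting over adjacent vertices of $\tilde\Gamma_n(\eS)$ are joined by an edge. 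Since $\tilde\Gamma_n(\eS)$ is connected, chaining clusters along its edges makes $\tilde\Gamma_{n+1}(\eS)$ connected.

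With the claim in hand, the equivalence is short. If $\tilde\Gamma(\eS)$ is disconnected, partition $I=I_1\sqcup I_2$ with no edge across; then $\bigcup_{i\in I_1}K_i$ and $\bigcup_{i\in I_2}K_i$ are disjoint (a common point would produce an edge), nonempty, compact, and cover $K$, so $K$ is disconnected. Conversely, assume $\tilde\Gamma(\eS)$ connected and suppose for contradiction that $K=P\sqcup Q$ with $P,Q$ nonempty compact; put $\delta=\mathrm{dist}(P,Q)>0$. Since the similarity ratios $r_i<1$ give $\diam K_\bj\leq(\max_i r_i)^{n}\diam K\to 0$, for $n$ large every piece of order $n$ has diameter $<\delta$ and therefore lies entirely in $P$ or entirely in $Q$; this splits the vertices of $\tilde\Gamma_n(\eS)$ into two nonempty classes with no edge between them, contradicting the claim. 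Hence $K$ is connected.

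For local connectedness, assume $K$ connected; then every piece $K_\bj=S_\bj(K)$ is connected, being a continuous image of $K$. Fix $x\in K$ and $\varepsilon>0$, choose $n$ with $\diam K_\bj<\varepsilon/4$ for all $|\bj|=n$, let $V$ be the union of the pieces of order $n$ that contain $x$, and set $W=V\cup\bigcup\{K_\bj:|\bj|=n,\ K_\bj\cap V\neq\0\}$. Then $V$ is connected, being a union of connected sets all containing $x$; hence so is $W$, each added piece being connected and meeting $V$; and $\diam W<\varepsilon$ by the triangle inequality. I expect the delicate point to be that $W$ is genuinely a \emph{neighbourhood} of $x$, not merely a finite union of closed pieces through $x$: the right move is to cut down rather than to build up, noting that $U:=K\mmm\bigcup\{K_\bj:|\bj|=n,\ x\notin K_\bj\}$ is open, contains $x$, and satisfies $U\IN V\IN W$. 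Since $\varepsilon$ was arbitrary, $x$ has arbitrarily small connected neighbourhoods, so $K$ is locally connected. This neighbourhood step, together with the inductive claim, is where the self-similar structure is really exploited; everything else is routine compactness.
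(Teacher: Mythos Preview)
Your argument is correct and complete. Note, however, that the paper does \emph{not} supply its own proof of this statement: it is quoted as a known result with citations to Hata \cite{hata85} and Kigami \cite{kig}, and no argument appears in the text. So there is nothing in the paper to compare your proof against.

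That said, what you have written is essentially the standard proof one finds in those references. The inductive propagation of connectedness from $\tilde\Gamma(\eS)$ to all $\tilde\Gamma_n(\eS)$, followed by the diameter-shrinking compactness argument, is exactly Hata's route; the local-connectedness step via the open set $U=K\mmm\bigcup\{K_\bj:|\bj|=n,\ x\notin K_\bj\}$ is the usual one. One small remark: your set $W$ is superfluous, since $V$ itself is already a connected neighbourhood of $x$ (it contains the open set $U$) with $\diam V<\varepsilon/2<\varepsilon$; the extra layer of adjacent pieces is not needed.
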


Throughout this paper we consider the  case when the maps $S_i$ are the similarities  and the attractor $K$ is connected.  
In this case we say $K$ is a {\em self-similar continuum}. 

We  say that the system $\eS$ {\em satisfies one-point intersection property} if for any non-equal pieces   $K_i, K_j,\; i,j\in I$ of order 1 of the attractor $K$, $\#(K_i\cap K_j)\le 1$.
We define the (bipartite) intersection graph for such systems:

\begin{definition}[\cite{fip}]
Let a set $K=K(\eS)$ be a  self-similar continuum which possesses  one-point intersection property.
{\em The intersection graph} $\Gamma(\eS)$ of the system $\eS$ is a bipartite graph with parts $\eK=\{K_i:\; i\in I\}$ and $\eP=\{p:\;p\in K_i\cap K_j,\; i,j\in I, i\neq j\}$, and with a set of edges $E=\{(K_i,p):p\in K_i\}$.
\end{definition}

We call $K_i\in\eK$ {\em white vertices} 
 and $p\in \eP$ {\em black vertices} of the graph $\Ga$.

\begin{definition}[\cite{MW1988}]\label{def:gds}
Let $\{\eS_{ij}\mid i,j=1,\ldots,m\}$ be a finite set of systems of contractive maps.
Let $T_{ij}$ be the operators defined by $T_{ij}(A)=\bigcup\limits_{S_{ijk}\in\eS_{ij}}S_{ijk}(A)$.
Then the family of compact sets
$$\left\{K_j=\bigcup\limits_{i=1}^m T_{ij}(K_i)\mid j=1,\ldots,m\right\}$$ 
is called the attractor of the graph-directed system $\{\eS_{ij}\mid i,j=1,\ldots,m\}$.
\end{definition}



\subsection{Dendrites\\}

\begin{definition}
A {\em dendrite} is a locally connected continuum containing no simple closed curve.
A {\em self-similar dendrite} is a self-similar continuum, which is a dendrite.
\end{definition}

We shall use the notion of {\em order of a point} in the sense of Menger-Urysohn (see \cite[Vol.2, \S 51, p.274]{Kur}) and we denote by $Ord (p, X)$ the order of the continuum $X$ at a point $p\in X$. 
Points of order $1$ in a continuum $X$ are called {\em end points} of $X$; the set of all end points of $X$  will be  denoted by $EP(X)$. 
A point $p$ of a continuum $X$ is called a {\em cut point} of $X$ provided that $X \setminus \{p\}$ is not connected; the set of all cut points of $X$ will be denoted by $CP(X)$. 
Points of order at least $3$ are called {\em ramification points} of $X$; the  set of all ramification points of $X$ will be denoted by $RP(X)$.

We  use the following statements selected from  \cite[Theorem 1.1]{Char}:
\begin{theorem} For a continuum $X$ the following conditions are equivalent:
\begin{enumerate}[nolistsep]
    \item $X$ is dendrite;
    \item every two distinct points of $X$ are separated by a third point;
    \item each point of $X$ is either a cut point or an end point of $X$;
    \item each non-degenerate subcontinuum of $X$ contains uncountable many cut points of $X$.
    \item for each point $p \in X$ the number of components of the set $X \setminus \{p\} = ord (p, X)$ whenever either of these is finite;
    \item the intersection of every two connected subsets of X is connected;
    \item $X$ is locally connected and uniquely arcwise connected.
\end{enumerate}
\end{theorem}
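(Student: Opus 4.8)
The plan is to treat this as the classical characterization it is (it is quoted from \cite[Theorem~1.1]{Char}) and to hang the seven conditions on the two ``extreme'' ones: the defining property~(1) and the strongest structural property~(7), unique arcwise connectedness. I would first establish the quick equivalence (1)~$\Leftrightarrow$~(7), then obtain (2),(3),(4),(5),(6) as consequences of~(1), and finally funnel (2)--(7) back to~(1). The only external inputs are standard facts about Peano continua: a locally connected continuum is arcwise connected; it admits a convex metric; and a connected, locally connected, locally compact metrizable space --- for instance an open subset of a compact metric space --- is arcwise connected.

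\emph{The core equivalence.} For (1)~$\Rightarrow$~(7): a dendrite is by definition a locally connected continuum, hence arcwise connected, and two arcs with the same endpoints and distinct images would have a union containing a simple closed curve, which is forbidden. Conversely (7)~$\Rightarrow$~(1): a simple closed curve would provide two arcs between a pair of its points, contradicting uniqueness, so the only thing to add to ``locally connected'' is the absence of simple closed curves, which we have. From (1)/(7) the other conditions follow by arc arguments: if $c$ lies interior to the unique arc from $a$ to $b$, then $c$ separates $a$ from $b$ --- else $X\setminus\{c\}$ is connected, locally connected and locally compact, hence contains an arc from $a$ to $b$ missing $c$ --- which gives~(2), and hence~(3) since a non-end point has order $\geq 2$ and thus lies interior to a concatenation of two of its branches; (6) because for connected $C,D$ and $p,q\in C\cap D$ the unique arc $[p,q]$ lies in $C$ (a point of it outside $C$ would be a cut point of $X$ disconnecting $C$) and likewise in $D$; (4) because a non-degenerate subcontinuum of a dendrite is again a non-degenerate dendrite, hence contains an arc whose interior points are cut points of $X$; and (5) because components of $X\setminus\{p\}$ are open and, in a dendrite, exactly one branch at $p$ enters each of them.

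\emph{Back to the definition.} To finish, each of~(2)--(6) must imply~(1). Two of these are immediate: (5)~$\Rightarrow$~(3), because if $p$ is not a cut point then $X\setminus\{p\}$ has one component and~(5) forces $\mathrm{Ord}(p,X)=1$, i.e.\ $p$ is an end point; and (6)~$\Rightarrow$~``no simple closed curve'', since (6) applied to the two arcs of such a curve would give a disconnected intersection of two connected sets. The equivalence (2)~$\Leftrightarrow$~(3) is elementary, and (4)~reduces to~(2) by passing to an irreducible subcontinuum between a putative pair of points that no third point separates. After these reductions, every remaining obligation is the same one: show that a continuum in which every two points are separated by a third --- equivalently, one satisfying~(6) --- is \emph{locally connected}; once local connectedness is known, combining it with ``no simple closed curve'' yields~(1).

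\emph{The main obstacle.} That local-connectedness step is where the real work lies, and it is the part I would not try to bypass: the standard route argues by contradiction, using the separation hypothesis to show $X$ is hereditarily unicoherent and then running a continuum-of-convergence / boundary-bumping analysis to produce, at a point where $X$ fails to be locally connected, two points admitting no separating third point. This is exactly the heart of \cite[Theorem~1.1]{Char} (see also \cite[Vol.~2, \S 51]{Kur}), and in the present context I would invoke it rather than reprove it.
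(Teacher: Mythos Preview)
The paper does not prove this theorem: it is quoted verbatim as a selection from \cite[Theorem~1.1]{Char} and used as background, with no argument supplied. So there is no ``paper's own proof'' to compare against; the authors simply cite the result.

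Your sketch is a reasonable outline of the classical argument and, appropriately, you end by invoking \cite{Char} and \cite{Kur} for the hard step (deducing local connectedness from the separation/unicoherence hypotheses). A couple of the reductions you label ``elementary'' are less immediate than you suggest --- for instance, the passage from~(3) to~(2) (that cut-or-end at every point forces any two points to be separated by a third) and the reduction of~(4) to~(2) via an irreducible subcontinuum both already use nontrivial continuum-theoretic facts --- but since you explicitly defer the substantive work to the cited sources, this is consistent with how the paper itself treats the theorem.
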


\begin{definition}\label{gaxy}
For a finite subset $A$ of a dendrite $X$, we denote by $\ga(A)$ the smallest subdendrite of $X$ containing $A$. Particularly $\ga(x,y)$ denotes
the unique subarc  in $X$ with endpoints $x,y$. 
\end{definition}

\begin{theorem}[\cite{fip, sss2}]\label{thm:den_sufficient}

Let a set $K=K(\eS)$ be a  self-similar continuum which possesses  one-point intersection property.
If the intersection graph $\Gamma(\eS)$ of the system $\eS$ is a tree, then its attractor $K$ is a dendrite.
\end{theorem}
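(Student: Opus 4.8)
\emph{Plan of proof.} Since $K$ is a self-similar continuum, the theorem of Hata and Kigami quoted above already gives that $K$ is locally connected; hence, by the definition of a dendrite, it remains only to show that $K$ contains no simple closed curve. I would deduce this from the following local statement, proved directly from the hypothesis that $\Ga(\eS)$ is a tree:

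\emph{Key claim.} Every simple closed curve $\ga\IN K$ is contained in a single piece $K_k$, $k\in I$.

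Granting the Key claim, the conclusion follows at once by self-similarity: if $\ga\IN K$ is a simple closed curve, the Key claim gives $\ga\IN K_{k_1}$ for some $k_1$; since $S_{k_1}^{-1}(\ga)$ is again a simple closed curve in $K$, the Key claim applied to it gives $\ga\IN K_{k_1k_2}$, and inductively $\ga\IN K_{k_1\cdots k_n}$ for every $n$. Then $\diam\ga\leq\diam K_{k_1\cdots k_n}\to0$, which is absurd. So $K$ is a locally connected continuum without simple closed curves, i.e. a dendrite.

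The first step toward the Key claim --- which I expect to be the main obstacle --- is to get rid of the pieces that $\ga$ merely grazes. Fix $x\in\ga$ and choose an arc-neighbourhood $N_x$ of $x$ in $\ga$ (possible since $\ga$ is a circle), with $x$ in its relative interior, so small that $N_x\cap K_j=\0$ for every $j\in I$ with $x\notin K_j$ --- possible, since there are finitely many such $j$ and each lies at positive distance from $x$. Then $N_x=\bigcup_{j:\,x\in K_j}(N_x\cap K_j)$ is a finite closed cover of the arc $N_x$, so by the Baire category theorem some $N_x\cap K_{j(x)}$ has nonempty interior in $N_x$ and therefore contains a non-degenerate subarc of $\ga$; in particular $x\in K_{j(x)}$ and $\ga\cap K_{j(x)}$ is infinite. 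Putting $J'=\{j\in I:\ \ga\cap K_j\text{ is infinite}\}$, this shows $\ga\IN\bigcup_{j\in J'}K_j$; and if $\ga$ lies in no single piece then $|J'|\geq2$. This reduction is exactly what makes a single separating point enough below: without it the nerve of the cover $\{\ga\cap K_j\}$ could contain cycles (several pieces sharing one point), and the separation argument would break down.

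The remaining step uses the tree structure. Since $\ga=\bigcup_{j\in J'}(\ga\cap K_j)$ is a finite closed cover of a connected set, its nerve is connected (if it were not, the unions over the two sides would be disjoint nonempty closed sets covering $\ga$), so there exist $a\neq b$ in $J'$ with $\ga\cap K_a\cap K_b\neq\0$; by the one-point intersection property $K_a\cap K_b=\{p\}$ with $p\in\ga$, and $p$ is a black vertex of $\Ga(\eS)$. Removing $p$ from the tree $\Ga(\eS)$ leaves components $C_1,\dots,C_r$ with $r=\#\{i:p\in K_i\}\geq2$; because $\Ga(\eS)$ is a tree, $K_a$ and $K_b$ lie in different components, say $C_1$ and $C_2$. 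Set $U_\ell=\bigcup_{K_s\in C_\ell}K_s$. A short argument with the tree (if $K_s\in C_\ell$ and $K_t\in C_{\ell'}$, $\ell\neq\ell'$, intersect, their common point is a black vertex adjacent to both, and were it $\neq p$ it would yield a second $K_s$--$K_t$ path) shows $U_\ell\cap U_{\ell'}=\{p\}$ for $\ell\neq\ell'$ and $\bigcup_\ell U_\ell=K$; hence $K\mmm\{p\}=\bigsqcup_\ell(U_\ell\mmm\{p\})$ is a partition into finitely many relatively clopen sets. Now $\ga\mmm\{p\}$ is connected (an arc), so it lies in one $U_{\ell_0}\mmm\{p\}$ and, taking closures, $\ga\IN U_{\ell_0}$. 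But $K_a\IN U_1$, $K_b\IN U_2$, and distinct $U_\ell$ meet only in $p$, so at least one of $\ga\cap K_a$, $\ga\cap K_b$ is contained in $\{p\}$, contradicting $a,b\in J'$. This proves the Key claim, and with it the theorem.
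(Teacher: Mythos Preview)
The paper does not give its own proof of this theorem: it is stated with the citation \cite{fip,sss2} and no argument, so there is nothing in the paper to compare against line by line. Your argument is correct and is essentially the standard route (reduce ``dendrite'' to ``no simple closed curve'' via local connectedness, then push any such curve into a single first-level piece using the tree structure, and iterate by self-similarity).

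Two small remarks. First, invoking Baire for the finite closed cover of the arc $N_x$ is more than you need: if finitely many closed sets cover a space and all had empty interior, an easy induction shows their union would too; so one of the $N_x\cap K_j$ has interior in $N_x$ for purely elementary reasons. Second, your separation argument for the $U_\ell$'s is exactly the place where the \emph{bipartite} nature of $\Ga(\eS)$ is used (removing the black vertex $p$ leaves each white neighbour of $p$ in its own component), and it is worth saying that explicitly, since in the ordinary intersection graph $\tilde\Ga(\eS)$ the corresponding statement can fail when three or more pieces share the single point $p$. With those cosmetic clarifications your write-up stands on its own.
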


\begin{definition}\label{mtree}
Let $K$ be a { self-similar dendrite} with a finite self-similar boundary $\dd K$. 
The minimal subdendrite $\ga\IN K$ containing $\dd K$ is called {\em the main tree} of the self-similar dendrite $K$.
\end{definition}

\subsection{Fractal squares\\}

\begin{definition}
Let $D=\{d_1,\ldots,d_m\}\IN\{0,1,\ldots,n-1\}^2$, where $n\ge 2$, and $1<m<n^2$.
{\em A fractal square} of  order $n$  with a {\em digit set $D$} is a compact set $K\IN\rr^2$,  satisfying the equation
\begin{equation}\label{fsqeq}
 K=\dfrac{K+D}{n}   
\end{equation}
\end{definition}
\noindent The equation \eqref{fsqeq} will be  often used in its equivalent form $nK=K+D$.
Since the unit square $P=[0,1]^2$ satisfies the equation $nP\NI P+D$ the set $K$ is contained in $P$.
\begin{figure}[H]
    \centering
    \includegraphics[width=0.35\textwidth]{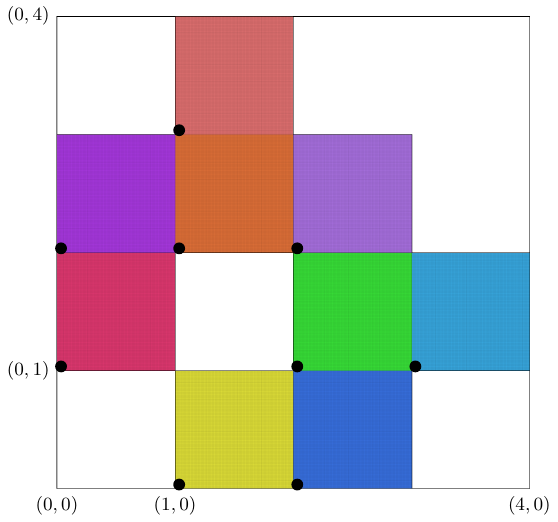}
    \hspace{1cm}
    \includegraphics[width=0.3\textwidth]{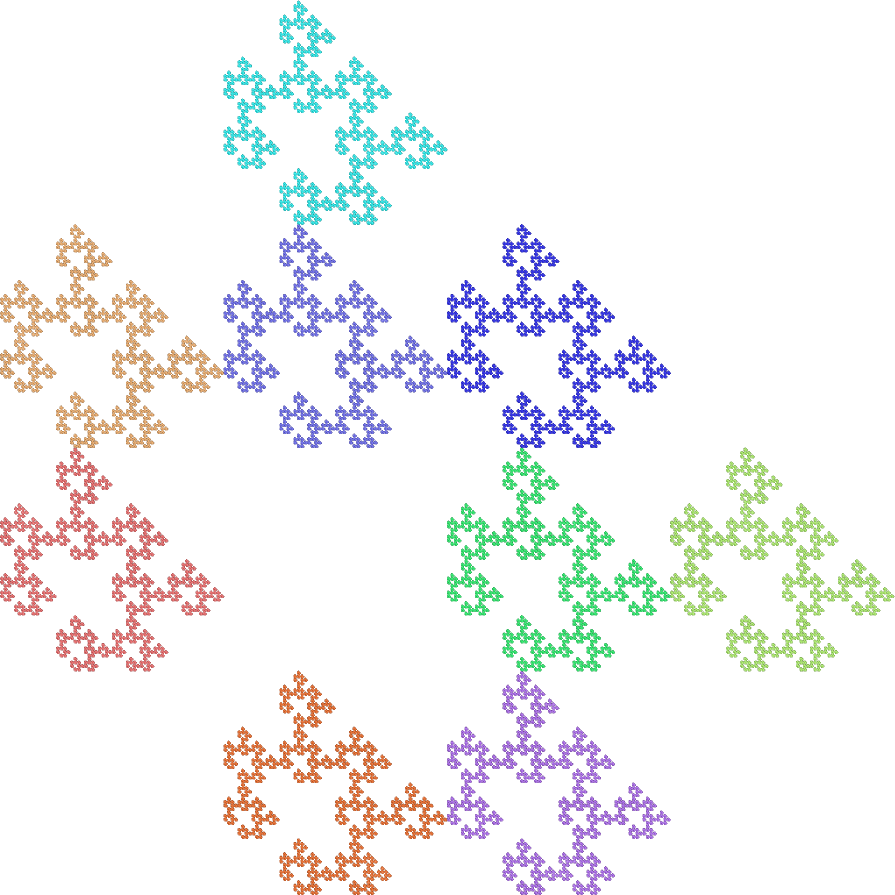}
    \caption{A fractal square (right) and the  set $D+P$ (left) The elements of $D$ are marked by black points.}
    \label{fig:fr_sq}
\end{figure}
The equation \eqref{fsqeq} defines a system $\eS$ of homotheties $S_i(x)=\dfrac{x+d_i}{n}$, 
where $i\in I=\{1,...,m\}$,
and the Hutchinson's operator $T_\eS$ of the system $\eS$ is defined by the equation $T_\eS(A)=\dfrac{D+A}{n}$. \\

\begin{definition}\label{refin}
For any $k\in \nn$, {\em the $k$-th refinement} of the system $\eS$ is a system $\eS^k=\{S_\bi, \bi\in I^k\}$, where  $S_\bi(x)=\dfrac{x+d_\bi}{n^k}$ and $d_\bi=n^{k-1}d_{i_1}+n^{k-2}d_{i_2}+...+d_{i_k}$. 
\end{definition}

The system $\eS^k$ represents $K$ as a fractal cube of order $n^k$ with the digit set 
$D^k=n^{k-1}D+n^{k-2}D+...+D$ and pieces $\dfrac{K+d_\bi}{n^k}$.
The Hutchinson  operator $T_\eS^k$ of the system $\eS^k$ is defined by the equation
$T_\eS^k(A)=\dfrac{D^k+A}{n^k}$.

Each infinite string 
$\bi=i_1i_2\ldots\in I^\8$ defines a unique point $x=\pi(\bi)$ by the formula
$\pi(\bi)=\sum\limits_{k=1}^n \dfrac{d_{i_k}}{n^k}$.

\begin{remark} \label{rmk:fsd}
From now on, unless otherwise specified, by the words  {\em"fractal square $K$"} we mean {\em " fractal square $K$ of order $n$ with a digit set $D$"}. 
If a fractal square is a dendrite, we call it a "fractal square dendrite" and use abbreviation "FSD" which usually would mean {\em "a fractal square dendrite of order $n$ with a digit set $D$"}. 
\end{remark}

\section{The intersections of the pieces of a fractal square}

In  Subsection {\it 3.1} we consider the face intersection sets $F_\bma$ (Definition \ref{def-falpha}) and show that the pieces of a fractal square intersect by homothetic images of these sets $F_\bma$ (Proposition \ref{fbma}) and find the equations for these sets (Theorem \ref{thm_falpha}).
In Subsection {\it 3.2} we obtain the conditions under which the side intersection sets $F_\bma$ are singletons
(Corollary \ref{onepoint}).

\subsection{The faces $K_\al$ and the face intersection sets $F_\al$ of a fractal square\\}

\begin{definition}\label{setA}
We denote by $A$ the set $\{-1,0,1\}^2$ of  2-tuples $\bma=(\al_1,\al_2)$.    
\end{definition}

There is a one-to-one correspondence between the set $A$ and the set of faces of the unit square $P=[0,1]^2$.
\begin{definition}
Each $\bma\in A=\{-1,0,1\}^2$, defines a unique  face $P_\bma$ of the unit square $P$ by the equality $P_\bma=P\cap(P+\bma)$.
\end{definition}
The symmetry of opposite faces of $P$ is expressed by the equalities $$P_\bma=P\cap(P+\bma)=((P-\bma)\cap P)+\bma=P_{-\bma}+\bma$$

There is an order relation $\sqsubseteq$ on the set $A$ which is  order isomorphic to the inclusion ordering $\supseteq$ of the faces of $P$.
\begin{definition}\label{Aorder}
 Given $\bma=(\al_1\al_2)$, $\bmb=(\be_1\be_2)\in A$, we write $\bma\sqsubseteq \bmb $ if $\al_i\neq 0$ implies $\be_i=\al_i$, and 
$\al_i= 0$ implies $\be_i\neq 0$.   
\end{definition}

Obviously,  $\bma\sqsubseteq\bmb$ iff $P_\bma\supseteq P_\bmb$. The maximal elements of $A$ with respect to the  relation $\sqsubseteq$ are $(\pm 1,\pm 1)$, so the faces $P_{(-1,-1)}, P_{(1,-1)}, P_{(-1,1)}, P_{(1,1)}$ correspond to the respective corner points $(0,0), (1,0), (0,1), (1,1)$ of $P$.

\begin{definition}\label{def-falpha}
The face $K_\bma$ of a fractal square $K$ is the intersection $K\cap P_\bma$.\\ We call the intersection of opposite faces $K_\bma\cap(K_{-\bma}+\bma)$ the {\em face intersection set} $F_\bma$.
\end{definition}

\begin{figure}[H]
    \centering
    \includegraphics[width=0.3\textwidth]{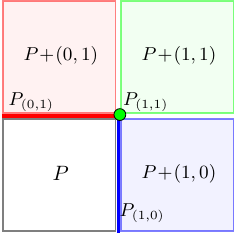}
    \hspace{1cm}
    \includegraphics[width=0.3\textwidth]{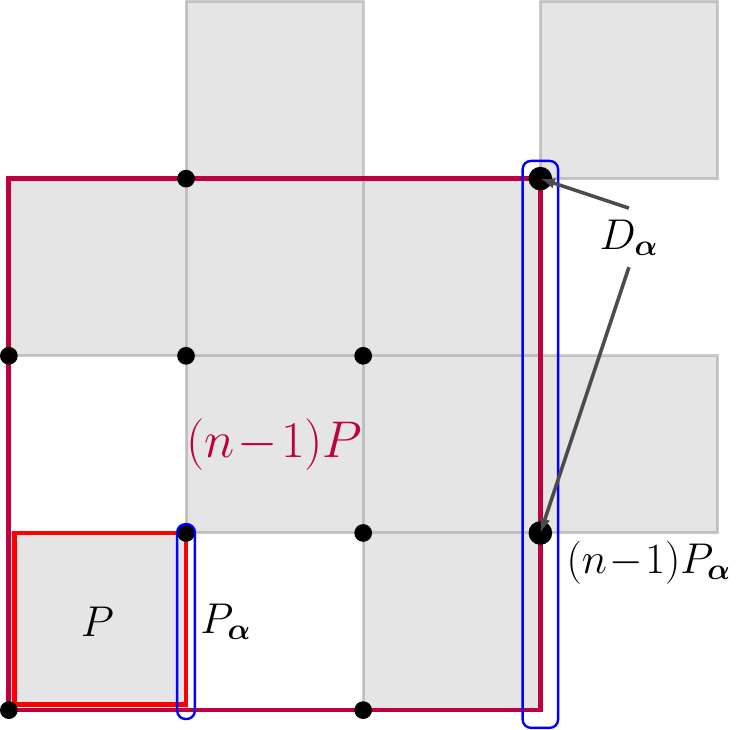}
    \caption{The sets $P_\bma$ and $D_\bma.$}
    \label{fig:faces}
\end{figure}

\begin{proposition}
The face $K_\bma$ of a fractal square $K$ satisfies the equation $n K_\bma=K_\bma+D_\bma$, where $D_\bma=D\cap(n-1)P_\bma$.
\end{proposition}

\begin{proof}
Note that $n(K\cap P_\bma)=(K+D)\cap nP_\bma.$ For any $d_i\in D$, $(d_i+K)\cap nP_\bma$ is empty unless $d_i\in(n-1)P_\bma.$ 
Therefore $d_i\in D\cap (n-1)P_\bma$. At the same time, $(d_i+K)\cap nP_\bma=d_i+K_\bma$, so $n K_\bma={K_\bma+D_\bma}$.
\end{proof}

If we project $K_\bma$ to the coordinate axis orthogonal to $\bma$, we obtain a fractal 1-cube or a {\em fractal segment} with a digit set $pr_\bma D_\bma$.

\begin{proposition}\label{fbma}
Let $K$ be a fractal square.\\
1. For any $\bma\in A$, $K\cap(K+\bma)=F_\bma=F_{-\bma}+\bma $;\\
2. If   $\bi,\bj \in I^k$, $\bi\neq \bj$ and  $K_\bi\cap K_\bj\neq\0$, then  $d_\bi-d_\bj=\bma$ for some $\bma\in A$, and   $$K_\bi\cap K_\bj=\dfrac{d_\bi+F_\bma}{n^k}$$
\end{proposition}

\begin{proof}
1.  Since $P\cap P+\bma=P_\bma=P_{-\bma}+\bma$, we have the chain of equalities $$K\cap(K+\bma)= K_\bma\cap (K_{-\bma}+\bma)=F_\bma=F_{-\bma}+\bma.$$

2. Note that $S_\bi^{-1}(K_\bi\cap K_\bj)=K\cap S_\bi^{-1}(K_\bj)=K\cap (K+d_\bj-d_\bi)$. 
Since by Definition \ref{refin}, $d_\bj-d_\bi\in \zz^k$, the last two squares are adjacent only if $d_\bj-d_\bi$  is equal to some  $\bma\in A$. 
In this case, $K\cap(K+d_\bj-d_\bi)=F_\bma$.
Therefore $K_\bi\cap K_\bj= S_\bi(F_\bma)= \dfrac{d_\bi+F_\bma}{n^k}$. 
\end{proof}

From the Proposition \ref{fbma} one sees that
for any fractal square there are only 4 possible types of intersections of its adjacent pieces of the same size. 
Any of these intersections is the image of $F_{10}, F_{01}, F_{11}$ or  $ F_{-1,1}$ under some map $S_\bi$. 
The  sets $F_{11}$ and  $ F_{-1,1}$ correspond to the intersections of opposite corners and therefore are either singletons or empty sets. 

For any $i,j\in\{-1,1\}$ the set $F_{ij}$ is a singleton $\{(i,j)\}$ if\\   $(n-1)\left(\frac{i+1}{2},\frac{j+1}{2}\right)\in D_1$ and $(n-1)\left(\frac{-i+1}{2},\frac{-j+1}{2}\right)\in D_2$, otherwise it is empty. 

The sets $F_{10}, F_{01}$ correspond to vertical and horizontal adjacency of pieces. 
These sets  are the intersections of opposite sides of $K$, which are fractal segments.

The equations for the sets $F_\bma$  are given by the following theorem.

\begin{theorem}\label{thm_falpha}
For $\bma\in A\mmm\{0\}$ the set $F_\bma$ satisfies the equation
\begin{equation}\label{sideint}
 F_\bma=\bigcup\limits_{\bmb\sqsupseteq\bma} \dfrac{F_\bmb+G_{\bma\bmb}}{n}
 \end{equation}
where 
$G_{\bma\bmb}=D_\bma\cap(D_{-\bma}+n\bma-\bmb)$.
\end{theorem}

\begin{proof} By Definition \ref{def-falpha},
\begin{equation}\label{line}
    nF_\bma=nK_\bma \cap (nK_{-\bma}+n\bma)= 
    \bigcup\limits_{d_1,d_2\in D}(K+d_1)\cap (K+d_2+n\bma)
    \end{equation}
$(K+d_1)\cap (K+d_2+n\bma)\neq\0$ implies that $d_2-d_1+n\bma=\bmb$  for some $\bmb\in A$.

Taking $i$-th coordinate for all entries of the last equality, we obtain that
$$d_{2i}-d_{1i}+n\al_i=\be_i$$ 
The relations $\al_i,\be_i\in\{-1,0,1\}$ and $|d_{2i}-d_{1i}|\le n-1$ imply  that:\\ 
1. if $\al_i=1$ then $\be_i>0$, therefore $\be_i=\al_i$ and $d_{2i}-d_{1i}=n-1$;\\
2.  if $\al_i=-1$ then $\be_i<0$, therefore $\be_i=\al_i$ and $d_{2i}-d_{1i}=1-n$;\\
3. if $\al_i=0$ then $d_{2i}-d_{1i}=\be_i$ which may be $-1,0$ or $+1$. \\
By  Definition \ref{Aorder}, $\bmb\sqsupseteq\bma$, and there are three possible choices of $\bmb$ if $\{\al_1,\al_2\}\ni 0$.\\
From the other hand, the equality $d_1=d_2+n\bma-\bmb$ shows that $d_1$ belongs to the set $ D\cap(D+n\bma-\bmb)$, which we denote by $G_{\bma\bmb}$.\\
For each $d_1\in G_{\bma\bmb}$, $(K+d_1)\cap (K+d_2+n\bma)= (K\cap(K+\bmb))+d_1= F_\bmb+d_1$.
The last equality shows  that $nF_\bma= \bigcup\limits_{\bmb\sqsupseteq\bma} {F_\bmb+G_{\bma\bmb}}$.
\end{proof}

\subsection{The cardinality of intersections of  fractal segments\\}

An  argument similar to that of Theorem \ref{thm_falpha} gives  the intersection formula for fractal segments:

\begin{proposition}
Let  $K_1=\dfrac{K_1+D_1}{n}$ and $K_2=\dfrac{K_2+D_2}{n}$ be fractal segments. 
The set $F_0 =K_{1}\cap K_{2}$ satisfies the equation   
\begin{equation*}
F_0=\dfrac{F_0+G_{0}}{n}\cup\dfrac{F_{-1}+G_{-1}}{n}\cup\dfrac{F_{1}+G_{1}}{n}, 
\end{equation*}
where for any $a\in\{-1,0,1\}$,  $G_{a}=D_1\cap(D_2-a)$ and $F_a=K_1\cap (K_2+a)$.\\
If $F_{1}\neq\0$ then $0\in D_2$ and $(n-1)\in D_1$, and  $F_1=\{1\}$.\\ 
If $F_{-1}\neq\0$ then  $0\in D_1$ and $(n-1)\in D_2$, and  $F_{-1}=\{0\}$. \hfill$\square$
 \end{proposition}

Let $K_1$ and $K_2$ be fractal segments of order $n$ with digit sets $D_1$ and $D_2$ such that $0\in D_1$ and $n-1\in D_2$ and therefore $0\in K_1$ and $1\in K_2$.
If $k\in D_1$ and $k-1\in D_2$, then $k/n\in K_1\cap K_2$ and this point is called a {\em junction point} of $K_1$ and $K_2$.

\begin{corollary}\label{fin_int}
Let $K_1=\dfrac{K_1+D_1}{n}$ and $K_2=\dfrac{K_2+D_2}{n}$ be fractal segments.
\begin{itemize}[nolistsep]
    \item[(i)] If $\#G_0> 1$, then the set $F_0$ is uncountable.
    \item[(ii)] If $\#G_0\le 1$, then only one of the sets $F_{-1},F_1$ is non-empty and $F_0$ is countable.
    \item[(iii)] The set $F_0$ is finite in the following cases:
    \begin{itemize}[nolistsep]
        \item[\textbf{(a)}] $\#G_0=1$ and $(F_{-1}+G_{-1})\cup(F_{1}+G_{1})=\0$;
        \item[\textbf{(b)}] $\#G_0=0$ and  $(F_{-1}+G_{-1})\cup(F_{1}+G_{1})\neq\0$.
    \end{itemize}
\end{itemize} 
\end{corollary}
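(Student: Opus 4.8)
The plan is to analyze the set-equation for $F_0$ from the preceding corollary, namely
\begin{equation*}
F_0=\dfrac{F_0+G_{0}}{n}\cup\dfrac{F_{-1}+G_{-1}}{n}\cup\dfrac{F_{1}+G_{1}}{n},
\end{equation*}
treating it as a graph-directed set equation in which $F_0$ feeds back into itself while $F_{-1},F_1$ are either empty or one-point ``source'' sets. First I would dispose of the case $\#G_0\ge 2$: then $F_0$ contains two disjoint scaled copies $\frac{F_0+g}{n}$, $\frac{F_0+g'}{n}$ with $g\neq g'\in G_0$, and iterating the inclusion $\frac{F_0+G_0}{n}\subset F_0$ shows $F_0$ maps onto a Cantor-type set (the attractor of at least two similarities with ratio $1/n$ and distinct translations, which is nonempty since $F_0\neq\0$ because $0\in K_1$, $1\in K_2$ forces the junction structure). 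Hence $F_0$ is uncountable. This also shows that for $F_0$ to be at most countable we need $\#G_0\le 1$, and in that regime exactly one of $F_{-1},F_1$ is nonempty: indeed $F_1=\{1\}$ requires $n-1\in D_1$ and $0\in D_2$, $F_{-1}=\{0\}$ requires $0\in D_1$ and $n-1\in D_2$; if both held then $0,n-1\in D_1\cap(D_2-0)$... — more carefully, $n-1\in D_1$ and $n-1\in D_2$ together with $0\in D_1$ and $0\in D_2$ would put $\{0,n-1\}\subset G_0$, contradicting $\#G_0\le 1$ (here I would spell out that $G_0=D_1\cap D_2$ for $a=0$, so the four membership conditions are exactly $0\in G_0$ and $n-1\in G_0$).

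Next, assuming $\#G_0\le1$, I would show $F_0$ is countable by exhibiting it as a countable union of points. The idea: unwinding the recursion $n$ times or indefinitely, every point of $F_0$ has an address $i_1i_2\ldots$ where at each stage we apply one of the three maps $x\mapsto \frac{x+g}{n}$; once we pick a branch corresponding to $G_{\pm1}$ (i.e.\ a branch feeding from $F_{\pm1}$) the remaining tail is forced because $F_{\pm1}$ is a single point, so that address contributes exactly one point; and the branch that always stays in the $G_0$-copy contributes at most one point when $\#G_0=1$ (it is the fixed point of the single contraction $x\mapsto\frac{x+g_0}{n}$) and, when $\#G_0=0$, contributes nothing. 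So $F_0=\{$at most one ``central'' point$\}\cup\bigcup_{k\ge0}\{$finitely many points produced by branching into $F_{\pm1}$ at step $k$ then running the forced tail$\}$, a countable set. This is the routine bookkeeping step.

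Finally I would read off finiteness. From the decomposition just described, $F_0$ is finite if and only if the ``branch into $F_{\pm1}$'' events can occur only finitely often, i.e.\ the recursion cannot re-enter the $G_{\pm1}$-part infinitely many times. When $\#G_0=1$ the central contraction is present and can be iterated forever, so finiteness of $F_0$ forces $\frac{F_{-1}+G_{-1}}{n}\cup\frac{F_{1}+G_{1}}{n}=\0$, giving case \textbf{(a)}; conversely if that union is empty then $F_0=\frac{F_0+G_0}{n}$ is the attractor of a single similarity, hence a single point, so $F_0$ is finite — this is the easy direction of \textbf{(a)}. When $\#G_0=0$, the recursion $F_0=\frac{F_{-1}+G_{-1}}{n}\cup\frac{F_1+G_1}{n}$ has no self-referential term at all: $F_0$ equals a finite union of one-point sets, hence is finite; and it is nonempty precisely when $\frac{F_{-1}+G_{-1}}{n}\cup\frac{F_1+G_1}{n}\neq\0$, which is exactly the hypothesis of \textbf{(b)}. (If $\#G_0=0$ and that union is empty as well, $F_0=\0$, which is still finite but trivial; I would note this briefly so the stated two cases are seen to be the substantive ones.)

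The main obstacle I anticipate is making the ``countable versus finite'' dichotomy rigorous rather than heuristic: one must argue carefully that a branch entering the $F_{\pm1}$ part truly has a uniquely determined (hence single-point) continuation — this uses that $F_{\pm1}$ is a singleton and that, in the corollary's formula, once we substitute $F_{\pm1}$ we are no longer in a recursion — and that the ``central'' address, when $\#G_0=1$, gives the unique fixed point of $x\mapsto\frac{x+g_0}{n}$ on the line and nothing else. Handling the bookkeeping of which $G$-sets are empty (several of $G_0,G_{\pm1}$ may vanish simultaneously) without case explosion is the fiddly part, but the structural observation — that $F_0$ satisfies a one-sided recursion with singleton inhomogeneous terms — makes it manageable.
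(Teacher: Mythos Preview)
Your approach is essentially the same as the paper's: both exploit the recursion $F_0=\frac{F_0+G_0}{n}\cup\frac{F_{-1}+G_{-1}}{n}\cup\frac{F_1+G_1}{n}$, obtain uncountability from the Cantor-type attractor $Q_0=\frac{Q_0+G_0}{n}$ when $\#G_0\ge2$, identify the single fixed point $d_0/(n-1)$ in case \textbf{(a)}, and read off finiteness directly in case \textbf{(b)}. You are in fact more complete than the paper on two points: you actually prove the clause ``only one of $F_{-1},F_1$ is non-empty'' (via $\{0,n-1\}\subset G_0$), which the paper's proof omits, and your address argument justifies \emph{countability} rather than merely infinitude, whereas the paper only exhibits the infinite orbit $x_k=\frac{x_0-\tilde d_0}{n^k}+\tilde d_0$. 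One small cleanup: for the uncountable case, rather than arguing $F_0\neq\varnothing$ first, it is cleaner (and this is what the paper does implicitly) to observe that the attractor $Q_0$ of the sub-IFS with digits $G_0\subset D_1\cap D_2$ automatically sits inside both $K_1$ and $K_2$, hence inside $F_0$.
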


\begin{proof}
Since for $i\in\{-1,+1\}$,$\#F_{i}\le 1$  the sets $\dfrac{F_{i}+G_{i}}{n}$ are finite.

(i) Note that if $\#G_0>1$, then $F_0$ contains the set $Q_0$ defined by the equation
$n Q_0=Q_0+G_0$, which is uncountable.\\

(ii) Suppose $\#G_0=1$, i.e. $G_0=\{d_0\}$, and $(F_1+G_1)\cup(F_{-1}+G_{-1})\neq\0$. \vspace{1mm}\\
Let $\td_0=\dfrac{d_0}{n-1}$ denote the fixed point of the homothety $S_{d_0}(x)=\dfrac{x+d_0}{n}$.
For any $x_0\in  \dfrac{(F_1+G_{1})\cup(F_{-1}+G_{-1})}{n} $ the set $F_0$ contains a sequence $x_k=\dfrac{x_0-\td_0}{n^k}+\td_0$.
Therefore $F_0$ is an infinite countable set.\\

(iii) In the case \textbf{(a)} we get $F_0=\{\td_0\},$ therefore $\#F_0=1$.\\

In the case \textbf{(b)} $n F_0=(F_1+G_1)\cup (F_{-1}+G_{-1})$, thus $\#F_0=\#F_{1}\cdot\#G_1+\#F_{-1}\cdot\#G_{-1}<\8$.
\end{proof}

\begin{corollary}\label{onepoint} 
The set $F_0$ is a singleton if \\
\textbf{(a)} $\#G_0=1$ and $\#F_{1}\cdot\#G_1+\#F_{-1}\cdot\#G_{-1}=0$; or\\
\textbf{(b)} $G_0=\0$ and 
$\#F_{1}\cdot\#G_1+\#F_{-1}\cdot\#G_{-1}=1$. \hfill $\square$
\end{corollary}



\section{The self-similar boundary and the main tree of a fractal square dendrite}

\begin{proposition}
\label{thm:den_necessary_sufficient}
If a fractal square $K$ is a dendrite then it possesses one-point intersection property.
\end{proposition}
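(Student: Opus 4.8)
I would argue by contradiction: assume that $\#(K_i\cap K_j)\geq 2$ for two distinct pieces of order $1$, and produce a simple closed curve inside $K$, which a dendrite cannot contain. First I would reduce to the essential case. Since $K_i=(K+d_i)/n$, the squares $S_i(P),S_j(P)$ are either disjoint, or meet in a single corner, or share a whole side; the first two cases give $\#(K_i\cap K_j)\leq 1$, so only the last is relevant, and replacing $K$ by its image under a symmetry of the square I may assume $d_j-d_i=\bma=(1,0)$, so that $S_i^{-1}(K_i\cap K_j)=F_\bma$. Since $K_i,K_j$ are connected subsets of the dendrite $K$, their intersection is connected (by the cited characterization of dendrites), hence so is $F_\bma$; being a compact connected subset of the side $P_\bma$ with at least two points, $F_\bma$ is a nondegenerate segment $J\IN P_\bma$, and then $J\IN K_\bma$ and $J-\bma\IN K_{-\bma}$.

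Next I would inflate $J$ to a full side. Projecting $J\IN K_\bma=(K_\bma+D_\bma)/n$ orthogonally to $\bma$ yields a fractal segment $Q=pr_\bma K_\bma$ with $Q=(Q+\tilde D)/n$, $\tilde D=pr_\bma D_\bma\IN\{0,\dots,n-1\}$, that contains the nondegenerate interval $pr_\bma J$; if $\#\tilde D<n$, then $Q$ is covered by $(\#\tilde D)^k$ intervals of length $n^{-k}$, so $|Q|\leq(\#\tilde D/n)^k\to 0$, which is impossible. Hence $\tilde D=\{0,\dots,n-1\}$, $Q=[0,1]$, so $K_\bma=P_\bma$ is the entire right side of the unit square; the same reasoning applied to $J-\bma\IN K_{-\bma}$ gives that $K_{-\bma}=P_{-\bma}$ is the entire left side. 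Therefore $D$ contains the full left and right columns of $\{0,\dots,n-1\}^2$, so $2n\leq\#D<n^2$ forces $n\geq 3$; the segments $L=\{0\}\times[0,1]$ and $R=\{1\}\times[0,1]$ lie in $K$; and since, for each $k=0,\dots,n-1$, the map $S_{(0,k)}$ sends $K_{-\bma}$ and $K_\bma$ onto the left and right edges of the small square $[0,\tfrac1n]\times[\tfrac kn,\tfrac{k+1}{n}]$, taking the union of these right edges shows that $M=\{\tfrac1n\}\times[0,1]$ lies in $K$ as well.

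Then I would build the loop. As a dendrite $K$ is arcwise connected, and so is each copy $S_{(0,k)}(K)$. Choose an arc $\beta_0\IN S_{(0,0)}(K)\IN[0,\tfrac1n]\times[0,\tfrac1n]$ from $(0,0)\in L$ to $(\tfrac1n,0)\in M$, and an arc $\beta_{n-1}\IN S_{(0,n-1)}(K)\IN[0,\tfrac1n]\times[\tfrac{n-1}{n},1]$ from $(0,1)\in L$ to $(\tfrac1n,1)\in M$. The concatenation of $\{0\}\times[0,\tfrac12]$, $\beta_0$ and $\{\tfrac1n\}\times[0,\tfrac12]$ is a path in $K$ from $(0,\tfrac12)$ to $(\tfrac1n,\tfrac12)$ whose image $X_1$ lies in the half-plane $y\leq\tfrac12$ and meets the line $y=\tfrac12$ only at $(0,\tfrac12)$ and $(\tfrac1n,\tfrac12)$ — here $n\geq 3$ is used, so that $\beta_0\IN\{y\leq\tfrac1n\}$; hence $X_1$ contains an arc $A_1$ from $(0,\tfrac12)$ to $(\tfrac1n,\tfrac12)$ meeting $\{y=\tfrac12\}$ only at its two endpoints. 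Symmetrically, $\beta_{n-1}$ together with the upper halves of $L$ and $M$ gives an arc $A_2\IN K$ from $(0,\tfrac12)$ to $(\tfrac1n,\tfrac12)$ lying in the half-plane $y\geq\tfrac12$ and meeting $\{y=\tfrac12\}$ only at its endpoints. Then $A_1\cap A_2=\{(0,\tfrac12),(\tfrac1n,\tfrac12)\}$, so $A_1\cup A_2$ is a simple closed curve contained in $K$, contradicting that $K$ is a dendrite.

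The step I expect to be the main obstacle is the inflation: showing that a one-dimensional self-similar face of $K$ containing a genuine segment must coincide with a whole side of the square. Once that is established, the reduction and the loop construction are routine manipulations of the self-similar and planar structure already set up, the only point requiring care being to check that $A_1$ and $A_2$ meet precisely in their two common endpoints.
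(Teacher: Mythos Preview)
Your argument is correct and follows essentially the same route as the paper's proof: reduce to neighbouring pieces sharing a side, use the dendrite property that intersections of connected subsets are connected to see that $F_\bma$ is a nondegenerate segment, deduce that the corresponding face is the full side of $P$ (the paper states ``a fractal segment $L$ contains a segment iff $L=[0,1]$'' without proof, whereas you supply the Lebesgue--measure counting argument), and then build a loop from the two parallel segments together with connecting arcs inside two of the small first--order squares. Your treatment is in fact more careful than the paper's sketch: you verify $n\geq 3$ so that the connecting arcs stay strictly below/above the line $y=\tfrac12$, and you check explicitly that the two half--loops $A_1,A_2$ meet only in their common endpoints, whereas the paper simply asserts that the union ``contains a closed curve''.
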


\begin{figure}[H]
    \centering
    \includegraphics[width=0.65\textwidth]{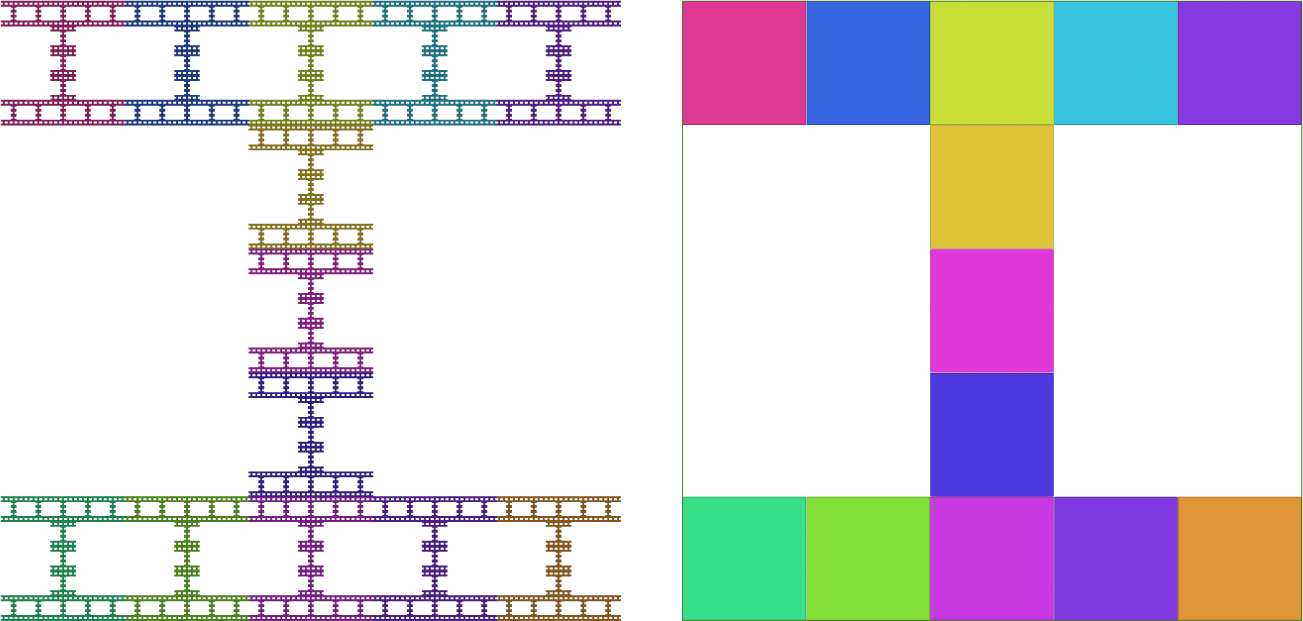}
    \caption{A fractal square  with a full side intersection.}
    \label{fig:line_int}
\end{figure}

\begin{proof}
The intersection $K_\bi\cap K_\bj$ of two pieces of a fractal square dendrite is simple-connected, therefore it is either a point or a straight line segment.The last is possible when $K_\bi\cap K_\bj$ is a full side, because
 a fractal segment $L$  contains a segment if and only if $L=[0,1]$. 

Suppose $K$ is a connected fractal square which has a  full side intersection of its pieces. Then, up to rotation, $K$ contains $[0,1]\times\{0,1\}$.
An example is shown on the Figure \ref{fig:line_int}.

In this case the set $D$ contains a subset $ \{0,...,n-1\}\times \{0,n-1\}$, therefore $K$ contains a subset $L=[0,1]\times\{0,1/n\}$. 
For each of the sets $K_{0k}=\dfrac{K+(0,k)}{n}$, the intersection $K_{0k}\cap L$ is a pair of parallel segments. 
By connectedness of $K_{0k}$ there is a path $\ga_{0k}\IN K_{0k}$ connecting these segments.   

The union $L\cup\ga_{00}\cap\ga_{n-1,0}$ contains a closed curve, hence $K$ is not a dendrite. 
\end{proof}

\begin{corollary}
A fractal square $K$ is a dendrite if and only if its bipartite intersection graph is a tree.\hfill$\square$  
\end{corollary}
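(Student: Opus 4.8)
The plan is to prove the two implications separately, leaning on the general theory already set up in the paper. Recall that the bipartite intersection graph $\Gamma(\eS)$ is only defined when $K$ possesses the one-point intersection property, so the statement is implicitly of the form: if $K$ is a connected fractal square with one-point intersection, then $K$ is a dendrite $\llra$ $\Gamma(\eS)$ is a tree. For the ``if'' direction there is nothing to do beyond quoting Theorem~\ref{thm:den_sufficient}: if $\Gamma(\eS)$ is a tree then $K$ is a dendrite. (One should note that ``tree'' for a bipartite graph means connected and acyclic; connectedness of $\Gamma(\eS)$ is equivalent to connectedness of the ordinary intersection graph $\tilde\Gamma(\eS)$, which holds since $K$ is a self-similar continuum.) So the content is entirely in the ``only if'' direction.

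For the ``only if'' direction, suppose $K$ is a fractal square dendrite. By Proposition~\ref{thm:den_necessary_sufficient} it possesses the one-point intersection property, so $\Gamma(\eS)$ is well-defined; it is connected by the remark above, so it remains to show it is acyclic. I would argue by contradiction: a cycle in the bipartite graph $\Gamma(\eS)$ is a sequence of distinct white vertices $K_{i_1},K_{i_2},\dots,K_{i_r},K_{i_1}$ ($r\ge 2$) and distinct black vertices $p_1\in K_{i_1}\cap K_{i_2}$, $p_2\in K_{i_2}\cap K_{i_3}$, \dots, $p_r\in K_{i_r}\cap K_{i_1}$. Since each $K_{i_j}$ is itself a connected fractal square (hence a locally connected continuum, in fact a dendrite as a subcontinuum of $K$), inside each $K_{i_j}$ there is a unique arc $\ga_j$ joining the two black vertices $p_{j-1}$ and $p_j$ lying on it. Concatenating $\ga_1,\dots,\ga_r$ produces a closed loop in $K$. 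The key point is that this loop is non-degenerate, i.e. it genuinely contains a simple closed curve: because the $p_j$ are pairwise distinct and each $\ga_j$ is a nondegenerate arc meeting the next one only at the shared endpoint $p_j$ (any further intersection $\ga_j\cap\ga_{j+1}$ would, by unique arcwise connectedness of the dendrite $K$, be an arc forcing $p_{j-1}$, $p_j$, $p_{j+1}$ to be positioned so that $\Gamma$ actually has a shorter cycle or a multiple edge — one reduces to a minimal cycle). Hence $K$ contains a simple closed curve, contradicting that $K$ is a dendrite.

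The main obstacle is the last step: turning the concatenated loop into an honest simple closed curve. The arcs $\ga_j$ can overlap in complicated ways, and one must either pass to a minimal cycle in $\Gamma(\eS)$ (so that no two non-adjacent $\ga_j$ share a point, using that two pieces $K_{i_j},K_{i_\ell}$ with $|j-\ell|\ge 2$ are non-adjacent in the cycle, though they might still intersect) or invoke the standard fact that a Peano continuum containing a loop that is not null-homotopic, or more simply any locally connected continuum that is not uniquely arcwise connected, contains a simple closed curve — this is exactly the equivalence (7)$\Leftrightarrow$(1) in the cited Theorem on dendrites. Concretely: the existence of the cycle in $\Gamma(\eS)$ gives two distinct arcs in $K$ between some pair of black vertices (go around the cycle two ways), so $K$ is not uniquely arcwise connected, hence not a dendrite. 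This is the clean way to close the argument and avoids the delicate planar bookkeeping. Finally, combining both directions with the already-established corollary chain (one-point intersection holds, $\Gamma$ is connected) yields the stated equivalence. $\square$
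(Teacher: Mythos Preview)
The paper offers no proof here: the corollary is stated with a $\square$ immediately after Proposition~\ref{thm:den_necessary_sufficient}, treating it as an evident consequence of that proposition together with Theorem~\ref{thm:den_sufficient} (and, implicitly, the converse direction contained in the cited references~\cite{fip,sss2}). Your write-up therefore supplies what the paper leaves out rather than competing with an existing argument.

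Your outline is correct. The ``if'' direction is exactly Theorem~\ref{thm:den_sufficient}. For ``only if'' you rightly invoke Proposition~\ref{thm:den_necessary_sufficient} to obtain the one-point intersection property, note that $\Ga(\eS)$ is connected because $\tilde\Ga(\eS)$ is, and then argue that a cycle in $\Ga(\eS)$ contradicts $K$ being a dendrite. One step needs tightening, precisely the one you flagged as the obstacle: saying the cycle yields ``two distinct arcs'' between a pair of black vertices is not literally right, since going the long way around produces only a connected set, and the arc it contains would \emph{a priori} coincide with the short one if $K$ were a dendrite. The clean finish is this. Given a cycle $K_{i_1},p_1,K_{i_2},\ldots,K_{i_r},p_r,K_{i_1}$ (necessarily $r\ge 3$ by one-point intersection), both $K_{i_1}$ and $K_{i_2}\cup\cdots\cup K_{i_r}$ are subcontinua of $K$ containing the distinct points $p_1$ and $p_r$; if $K$ were a dendrite the unique arc from $p_1$ to $p_r$ would lie in each of them, hence in their intersection $\bigcup_{j=2}^{r}(K_{i_1}\cap K_{i_j})$, which by one-point intersection is a set of at most $r-1$ points --- impossible for a nondegenerate arc. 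Equivalently, item~(6) of the quoted dendrite theorem says the intersection of two connected subsets of a dendrite is connected, whereas here it is a finite set with at least two points. This one-line contradiction replaces the hand-waving about minimal cycles and overlaps among the $\ga_j$ and completes your proof.
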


Consider the set  $A=\{-1,0,1\}^2$ of all faces of a square. 
For a fractal square $K$ with a digit set $D$ we define the set 
$$A_D=\{\bma\in A\mmm\{(0,0)\}: D\cap(D+\bma)\neq\0\}.$$ 
The set $A_D$ is the set of those $\bma\in A$  for which there are the pieces $K_i, K_j$ such that $d_j-d_i=\bma$ and therefore  having common boundary points. 
For such pieces, their common boundary is $S_i(F_\bma)=S_j(F_{-\bma})$. 
The following statement is obvious:\\

\begin{proposition}
The self-similar boundary $\dd K$ of a fractal square $K$ is the union
\begin{equation}
 \dd K=\bigcup\limits_{\bma\in A_D}F_\bma.   
\end{equation}
\end{proposition}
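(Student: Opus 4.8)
The plan is to prove the two inclusions $\dd K\IN\bigcup_{\bma\in A_D}F_\bma$ and $\bigcup_{\bma\in A_D}F_\bma\IN\dd K$ separately, using the scaling relations established earlier in Section 3.

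First I would unwind the definitions. Recall that the critical set is $C=\bigcup_{i\neq j}S_i(K)\cap S_j(K)$, and $\dd K$ is the set of all $x\in K$ such that $S_\bj(x)\in C$ for some $\bj\in I^*$. The key preliminary observation is that $C=\bigcup_{\bma\in A_D}S_{d_i}(F_\bma)$ over the appropriate digits: indeed, as explained in the text preceding Theorem (the intersection formula), two pieces $K_i=S_i(K)$ and $K_j=S_j(K)$ of order $1$ intersect only if $d_i-d_j=-\bma$ for some $\bma\in A$, in which case $d_i\in D\cap(D+\bma)$ (so $\bma\in A_D$ once we exclude $\bma=(0,0)$), and then $S_i(K)\cap S_j(K)=S_i\bigl(K\cap(K+\bma)\bigr)=S_i(K_\bma\cap(K_{-\bma}+\bma))=S_i(F_\bma)$. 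Hence $C=\bigcup_{\bma\in A_D}\bigcup_{d_i\in D\cap(D+\bma)}\frac{F_\bma+d_i}{n}$. So $\dd K=\{x\in K:\ \exists\,\bj\in I^*,\ S_\bj(x)\in\frac{F_\bma+d_i}{n}\ \text{for some }\bma\in A_D,\ d_i\in D\cap(D+\bma)\}$, which after composing with $S_i$ (appending a letter to $\bj$) is the same as $\{x:\ \exists\,\bk\in I^*,\ S_\bk(x)\in F_\bma,\ \bma\in A_D\}$.

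For the inclusion $\bigcup_{\bma\in A_D}F_\bma\IN\dd K$: each $F_\bma$ with $\bma\in A_D$ is a subset of $C$ (take $\bj$ the empty word, or rather a one-letter word as above), hence of $\dd K$ by definition. For the reverse inclusion $\dd K\IN\bigcup_{\bma\in A_D}F_\bma$, I must show that if $S_\bk(x)\in F_\bma$ for some word $\bk$ and some $\bma\in A_D$, then already $x\in F_{\bma'}$ for some $\bma'\in A_D$. This is where the scaling equation \eqref{sideint} does the work: the equation $F_\bma=\frac{F_\bma+G_\bma}{n}\cup\frac{F_{\bma\bmb}+G_{\bma\bmb}}{n}\cup\frac{F_{\bma,-\bmb}+G_{\bma,-\bmb}}{n}$ (together with the corner cases where $F_\bma$ is a singleton or empty, and the analogous formula for the corner faces) expresses $F_\bma$ as a graph-directed attractor: a point of $F_\bma$ is either a point of $S_{d}(F_{\bma'})$ for some digit $d$ and some $\bma'$ with $F_{\bma',\text{related}}$ nonempty — in all cases $\bma'\in A_D$. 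Formally, by induction on the length of $\bk$: if $S_\bk(x)\in F_\bma$, write $\bk=k_1\bk'$; by the scaling equation $S_{k_1}(S_{\bk'}(x))\in F_\bma$ forces $S_{\bk'}(x)\in F_{\bma''}$ for some $\bma''\in A_D$ (reading off which face from which of the three terms of \eqref{sideint} contains the point, and noting the digit $d_{k_1}$ must lie in the relevant $G$, which forces the corresponding face to be in $A_D$); then apply the inductive hypothesis to $\bk'$.

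The main obstacle I anticipate is the bookkeeping in that inductive step: one must check that each of the faces $F_\bma,F_{\bma\bmb},F_{\bma,-\bmb}$ appearing on the right of \eqref{sideint}, when nonempty, indeed has its index in $A_D$ — i.e. that $G_\bma\neq\0$ (resp. $G_{\bma\bmb}\neq\0$) implies $\bma\in A_D$ (resp. $\bma\pm\text{(unit)}\in A_D$ appropriately), which follows since $G_\bma=D_\bma\cap(D_{-\bma}+(n-1)\bma)\IN D\cap(D-\bma)$ up to the translation identities $D_\bma\IN D$ and $D_{-\bma}+\text{shift}\IN D$, so a nonempty $G$ witnesses the defining condition of $A_D$. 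One must also handle the degenerate faces ($\bma\in\{(1,1),(-1,1),\ldots\}$, where $F_\bma$ is a point or empty) and the fractal-segment analogue (the Corollary after the Theorem) uniformly; the cleanest way is to observe that in every case a nonempty $F_\bma$ satisfies a self-referential equation whose terms are scaled copies of faces $F_{\bma'}$ with $\bma'\in A_D$, so the argument is the same graph-directed induction. Once the set-up is organized this way, no genuine calculation remains.
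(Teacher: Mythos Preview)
The paper states this proposition without proof, so there is no argument in the paper to compare against; I can only assess your attempt on its own merits. Your overall plan---two inclusions, the harder one by induction on the word length via the scaling relation \eqref{sideint}---is reasonable, and the reformulation $\dd K=\{x:\exists\,\bk,\ \exists\,\bma\in A_D,\ S_\bk(x)\in F_\bma\}$ is correct once one notes that $S_\bj(x)\in C$ always gives $S_\bj(x)\in K_{j_1}\cap K_l$ for some $l\neq j_1$, so one may strip the first letter of $\bj$.

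The genuine gap is in the inductive step. You claim that $S_{k_1}(y)\in F_\bma$ forces $y\in F_{\bma''}$ with $\bma''\in A_D$, because ``a nonempty $G$ witnesses the defining condition of $A_D$'', and you justify this by the inclusion $G_\bma=D_\bma\cap(D_{-\bma}+(n-1)\bma)\subset D\cap(D-\bma)$. That inclusion is false: for $\bma=(1,0)$ every $g\in G_\bma\subset D_\bma$ has first coordinate $n-1$, so $g+\bma\notin\{0,\dots,n-1\}^2$ and $g\notin D-\bma$; your ``translation identity'' $D_{-\bma}+(n-1)\bma\subset D$ would say that every left-column digit, shifted to the right column, is again a digit, which is certainly not true in general. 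What $G_\bma\neq\emptyset$ actually encodes is that $D$ meets both extreme columns at a common height---a condition unrelated to $D\cap(D+\bma)\neq\emptyset$. Similarly $G_{\bma,\bmb}\neq\emptyset$ produces two digits of $D$ differing by $(n-1)\bma-\bmb$, not by $\bma+\bmb$. The problematic case is when the descent hits a corner: if $\bma=(1,0)$ and the step yields $\bma''=(1,1)$, then necessarily $y=(1,1)$ and one only knows $(0,0),(n-1,n-1)\in D$ together with $G_{(1,0),(0,1)}\neq\emptyset$; none of this gives $(1,1)\in A_D$. So the induction does not close as written, and a further argument---for instance, showing that under the standing hypotheses every nonempty $F_\bma$ with $\bma\neq(0,0)$ already has $\bma\in A_D$---is needed to complete the proof.
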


Note that for any piece $K_\bi$ of $K$, the topological boundary of $K_\bi$ in $K$ is a subset of 
$S_\bi(\dd K)$. 
Thus, if a path $\ga\IN K$ has endpoints $a\in K_\bi$ and $b\in K\mmm K_\bi$, then $\ga$ contains a point of $S_\bi(\dd K)$.

\begin{theorem}\label{ssboundary}
Let $K$ be a FSD which is not a line segment. 
There are  the following types of the self-similar boundary $\dd K$:\\
If  $\dd K=F_\bma\cup F_{-\bma}\cup F_\bmb\cup F_{-\bmb}$ then $\dd K$ is of\\
type {\bf A} if $\bma=(1,0)$, $ \bmb=(0,1)$;\\ 
type {\bf B} if $\bma=(1,1)$, $ \bmb=(1,-1)$; \\
type {\bf C} if  $\bma=(1,1) \mbox{ or } (1,-1),\bmb=(1,0) \mbox{ or  }(0,1)$.\\
For type {\bf D}, $\dd K=F_{(1,0)}\cup F_{(-1,0)}\cup F_{(0,1)}\cup F_{(0,-1)}\cup F_\bmb\cup F_{-\bmb}$,\\ where
 $\bmb=(\be_1,\be_2)=(1,1) \mbox{ or  }(1,-1)$.
 
For the types {\bf A, B, C} the set $\dd K$ consists of 4 points. The type {\bf D} boundary  $\dd K$  consists of 6 points {\bf D6} or, in degenerate case, of 3 points {\bf D3}.
\end{theorem}

\noindent The degenerate case {\bf D3} arises if there is $\bma\in\{(1,0),(-1,0)\}$ and $\bmb\in\{(0,1),(0,-1)\}$ such that $F_\bma=F_\bmb$.

\begin{proof}
If  for some $\bma\in A$ the self-similar boundary  $\dd K=F_\bma\cup F_{-\bma}$  then $K$ is a line segment with endpoints in $F_\bma$ and $ F_{-\bma}$.
 
If the set $A_D$ contains $(1,1),(1,-1)$ and $\bma=(0,1)$ or $\bma=(1,0)$, then $\#G_\bma\geq 2$, so by
Corollary \ref{fin_int}, $F_\bma$ is uncountable, which is impossible if $K$ is a dendrite, therefore $\bma\notin A_D$. 
All other possibilities are enumerated for the types {\bf A, B, C, D}.

For the type {\bf A}, the set $\dd K$ can consist only of four points. 
If one of these points is a corner point, then $\dd K$ contains exactly two corner points that are the endpoints of some side of $P$ and three points of $\dd K$ lie on this side.

Suppose  $\dd K$ contains 3 corner points, say, $(1,0), (0,0)$ and $(0,1)$.
It implies that $F_{(1,-1)}\neq\0$.
If so, the set $\dfrac{D+P}{n}$ is connected and contains the points (1,0) and (0,1). 
We assert that in this case $G_{(-1,1)}=(D\cap (D+(1,-1))\neq\0$.
 
Indeed, let $D'$ be a subset of $D$ that satisfies the following conditions:\\
1. The set $D'+P$ is connected;\\
2. $D'\cap (D'+(1,-1))=\varnothing$;\\
3. $(0,n-1)\in D'$. 

Then $D'$ is equal  either to $\{0\}\times\{k,...,n-1\}$  or to $\{k,...,n-1\}\times \{n-1\}$ for some $0\leq k\leq n-1$.
This observation proves our assertion.
 
So  $F_{(1,-1)}\neq\0$, $(1,-1)\in A_D$ means that we have the case {\bf D}. 
 
The same argument shows that in the case {\bf C} the set $\dd K$ can consist only of four points. 
Case {\bf B} is obvious.

For the type {\bf D} the set $\dd K=F_{(1,0)}\cup F_{(-1,0)}\cup F_{(0,1)}\cup F_{(0,-1)}\cup F_{(1,1)}\cup F_{(-1,-1)}$.
Let  $\bma\in\{(1,0),(-1,0)\}$ and $\bmb\in\{(0,1),(0,-1)\}$. 
Then the equalities  $F_\bma=F_{-\bmb}$, $F_{\bmb}=F_{\bma+\bmb}$ and $F_{-\bma}=F_{-\bma-\bmb}$ are equivalent and $\dd K=F_{-\bma}\cup F_\bma\cup F_\bmb$. 

This shows that  for the type {\bf D} the inequality $\#\dd K<6$ implies $\#\dd K=3$. 
We mark these two subtypes by {\bf D3} and {\bf D6}.
\end{proof}

As follows from Theorem \ref{ssboundary}, the self-similar boundary of a FSD  consists of 3, 4 or 6 points.

\begin{definition}\label{def:mtree}
Let $K$  be a FSD possessing finite self-similar boundary $\dd K$. 
The minimal subdendrite $\hat\ga\IN K$, containing  $\dd K$ is called the {\em main tree} of the dendrite $K$.
\end{definition}

We provide some statements concerning the main tree $\hat\ga$ of a self-similar dendrite $K$ with a finite self-similar boundary; see \cite{polden}.\\
1. If $x\in \hat\ga$ and for any component $Q$ of $K\mmm\{x\}$, $Q\cap\dd K\neq\0$, then $Ord(x,K)=Ord(x,\hat\ga)$.\\
2. If $x$ is a cut point  of dendrite $K$, then either $x\in S_\bi(\dd K)$ for some  $\bi$, or for any component $Q$ of $K\mmm\{x\}$, $Q\cap\dd K\neq\0$.\\
3. The set $CP(K)$ of cut points of $K$ is the union of all sets $S_\bi(\hat\ga), \bi\in I^*$. 
Therefore, $\dim_H(CP(K))=\dim_H(\hat\ga)$.

\section{The order of ramification points  of a fractal square dendrite}

\subsection{Forbidden sets of digits for different types of fractal square dendrites\\}

For each type of FSD there are combinations of digits which cannot occur in $D^k$ for any $k$. 
The following Lemma considers such combinations. 
We say that a set $Q\IN \zz^2$ {\em  is forbidden} (for the given type of $K$) if for any $k\in\nn$ and $\td\in D^k$,
$\td+Q\not\IN D^k.$

\begin{lemma}\label{quadruples}
The following  digit combinations $Q$ are forbidden:\\
a)  $\{(0,0), (1,0)\} $ and $\{(0,0), (0,1)\} $ for the type {\bf B} ;\\
b)  $\{(0,0), (0,1), (1,0), (1,1)\}$ for all types except {\bf C};\\
c)  $\{(0,0),\bma,\bmb,\bma+\bmb\}$ for the type {\bf C};\\
d3) the triple $\{a_1,a_2, a_3\}$ for the type {\bf D3}, if $\dd K=\{a_1,a_2, a_3\}$;\\  
d6) the triples $\{(0,0),(\be_1,0),\bmb)\}$,  $\{(0,0),(0,\be_2),\bmb)\}$ for the type {\bf D6}. 
\end{lemma}

\begin{proof} 
The case a)  is obvious because for any $\bma\in \{(0,1),(1,0)\}$ the set $ F_\bma$ is uncountable.

To prove that a set $Q$ of integer vectors is forbidden  for other types we  verify that a set $Q+K$ contains non-contractible closed loops.

(b) For the types {\bf A, D6} we suppose that $\bma=(1,0), \bmb=(0,1)$,
$F_{-\bma}=\{(0,b)\}, F_{-\bmb}=\{(a,0)\}$ where $a,b$ belong to the interval $]0,1[$. Since one of the sets $F_{(1,1)}$ or $F_{(1,-1)}$ is empty,
there is an open square
$P_1=\dfrac{x+\dot P}{n}$, $x\in \{(-1,-1),(0,-1), (-1,0), (0,0)\}$ which has an empty intersection with $Q+K$. Denote $x_1=(a,0)$, 
$x_2=(0,b)$, $x_3=(a-1,0)$, $x_4=(0,b-1)$. Let $\ga(x_1,x_2)$, 
 $\ga(x_2,x_3)$,  $\ga(x_3,x_4)$, $\ga(x_4,x_1)$
 be the unique subarcs in $K$, $-\bma+K$, $-\bma-\bmb+K$ and $-\bmb+K$ respectively.
 They form a   loop in $Q+K$, which encloses $P_1$, and therefore is non-contractible. 

If for the type {\bf A}, $F_\bma=(0,1)$, the square $P_1=\dfrac{x+\dot P}{n}$ has an empty intersection with $Q+K$ for both $x\in\{(-1,0 ),(0,0)\}$, the remaining argument is the same.

\begin{figure}[H]\label{forbid}
    \centering
    \includegraphics[width=.7\textwidth]{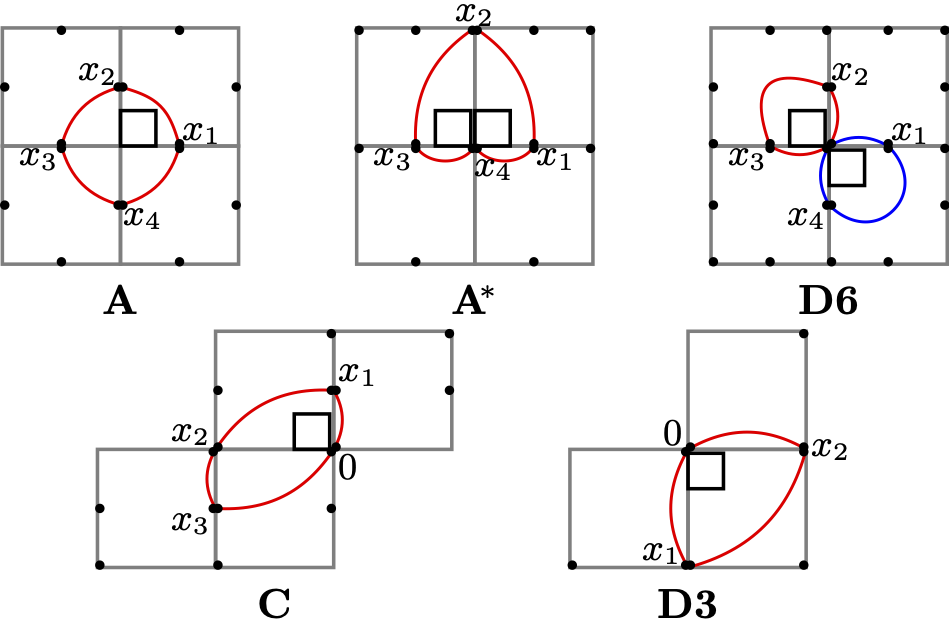}
    \caption{Construction of non-contractible loops in forbidden combinations for the types {\bf A, C} and {\bf D}.}
\end{figure}

The same way the statements c), d3) and d6) are proved.
We choose a vertex in $Q+K$, near which there is an open square $P_1=\dfrac{x+\dot P}{n}$ which has an empty intersection with $Q+K$ and then form a non-contractible loop formed by subarcs $\ga(x_i,x_j)$, which connect the boundary points of the sets $q_i+K$.
The choice of the points and the arcs is shown on the Figure \ref{forbid}.

\end{proof}

\subsection{The order and the number of corner points\\}

\begin{theorem}\label{thm:vertex_branching}
Let $K$ be a FSD.
For any corner point $a\in K$,
 $Ord(a,K)\leq 2$.
\end{theorem}

\begin{proof}\label{proof:vertex_branching}
Note that for any piece $K_\bi\ni a$, the set 
$K_\bi\mmm \{a\}=S_\bi(K\mmm \{a\})$. 
Therefore, if $Q$ is a component of $K\mmm \{a\}$, then
$Q\cap K_\bi=S_\bi(Q)$. 
Let $Q_1,\ldots, Q_k$ be the set of all these components, then $K\mmm \{a\}=\bigsqcup \limits_{j=1}^kQ_j$ and $\dd K\mmm \{a\}=\bigsqcup \limits_{j=1}^k(Q_j\cap\dd K)$, thus $\dd K_\bi\mmm \{a\}=\bigsqcup \limits_{j=1}^k(Q_j\cap\dd K_\bi)$. 
The number $k$ of the intersections in the last equality is no greater than 3, because these intersections correspond to two sides and one vertex opposite to the point $a$. 
Each of these intersections contains at most one point, and the same is true for the respective intersections  $Q_j\cap\dd K$.

Without loss of generality we may suppose  $a=(0,0)$.  Thus, $Q_1\cap K_{10}=F_{10}$ if $d_{(1,0)}\in D$, $Q_2\cap K_{01}=F_{01}$ if $d_{(0,1)}\in D$ and $Q_3\cap K_{11}=F_{11}$ if $d_{(1,1)}\in D$.

For the type {\bf A}, $F_{11}=\0$, hence $Ord(a,K)\leq 2$.
For the type {\bf B}, $F_{10}=F_{01}=\0$, therefore $Ord(a,K)=1$.
For the type {\bf C}, either $F_{10}=\0$ or $F_{01}=\0$, therefore $Ord(a,K)\leq 2$.

Suppose that the order of the corner point $a=(0,0)$ is 3.
Then the digits $(0,0)$, $(1,0)$, $(0,1)$ and $(1,1)$ are contained in $D$. 
By Lemma \ref{quadruples}, this is impossible.

This contradiction shows that in the case {\bf D}, $Ord(a,K)\leq 2$.
\end{proof}

Now we consider how many corner points may be for each of the types {\bf A, B, C, D}.

\begin{theorem}\label{thm:corner}
Let $K$ be a FSD. \\
If $K$ is of type  {\bf A}, it can have  two adjacent corner points  of orders  2,1 or 1,1,\\ or 1 corner point of   order 1 or 2, or no corner points.\\
If $K$ is of type  {\bf B}, it has four corner points of the order 1.\\
If $K$ is of type  {\bf C}, it has two corner points, where each has   order 1 or 2.\\
If $K$ is of type  {\bf D}, it may have either two corner points of orders  $\le 2$ for {\bf D6} or three corner points of  order 1 for the type {\bf D3}.
\end{theorem}
,
\begin{proof}
 $K$ contains 4 corner points if and only if it belongs to type {\bf B}. $K$  has 3 
corner points if and only if it is of type {\bf D3}. If $K$ has 2 opposite corner points then it belongs to type {\bf C} or {\bf D6}.

As it follows from  the proof of Theorem \ref{ssboundary}, if $K$ belongs to the type {\bf A}, it cannot contain  a pair of opposite corner points. 
Therefore $K$ contains at most 2 corner points which are the endpoints of some side of the square $P$. 
 
Let these  points be $(0,0)$ and $(1,0)$,
 then $\dd K=\{(0,0), (a,0), (1,0), (a,1)\}$, where $a=\dfrac{p}{n-1}$ and $p\in \{2,\ldots, n-2\}$. 
If the orders  of both points $(0,0)$ and $(1,0)$  are equal to 2, then, because each of the
pieces $\dfrac{K}{n}$ and  $\dfrac{n-1+K}{n}$ should have two neighbors,
$ \{(0,1), (n-1,1)\}\IN D$. 
In this case $F_{(1,0)}$ is  uncountable, which is impossible.
Therefore the orders of these two corner points are 2,1 or 1,1.

The case {\bf B} is obvious.
In the case {\bf C} the set $\dd K$ contains exactly 2 opposite corner points whose orders are at most 2.

Suppose $K$ belongs to the type {\bf D3} and its corner  points are $(0,0)$, $(1,0)$ and $(1,1)$. 
If $Ord((0,0),K)=2$, 
then $(0,0)$ has 2 neighbors. Therefore $(0,0)$, $(1,0)$ and $(1,1)$ are contained in $D$. By Lemma  \ref{quadruples}, this is forbidden.

The same argument may be applied to show that the point $(1,0)$ cannot have the order 2.

For the type {\bf D6}   there are exactly two opposite corner points in $\dd K$. 
The examples show that each of the combinations $1,1$; $1,2$ and $2,2$ is possible.
\end{proof} 


\begin{figure}[H]
    \centering
    \includegraphics[width=0.65\textwidth]{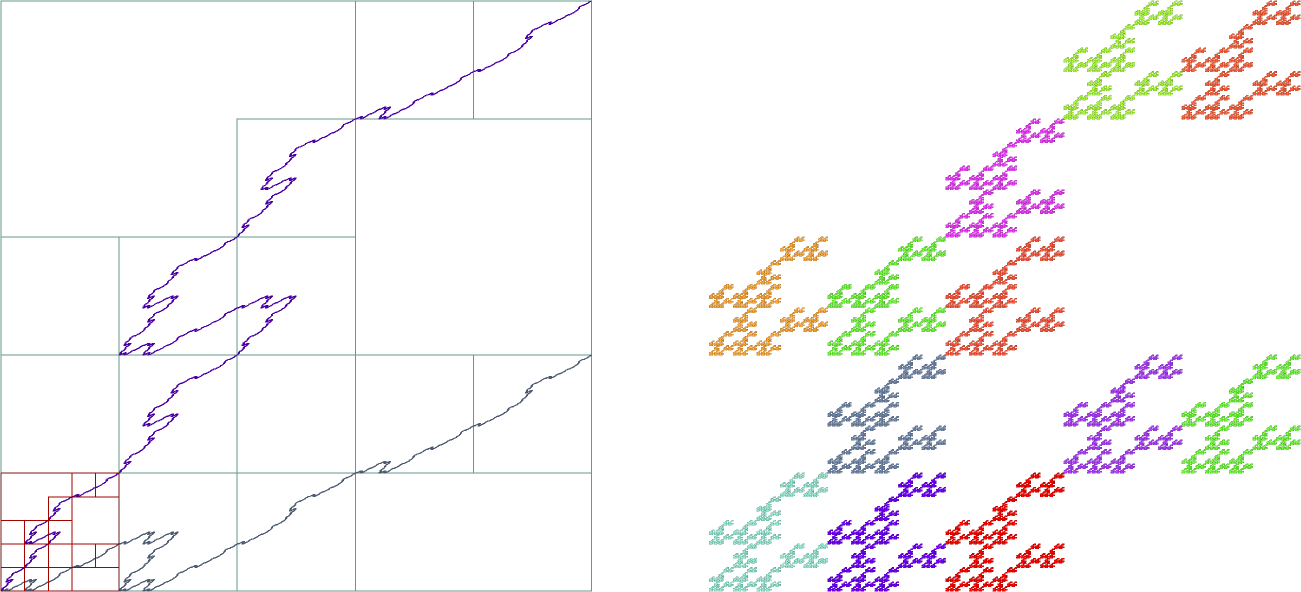}
    \caption{A corner point of order 2 for the type {\bf C} and two main arcs containing it.} 
    \label{fig:C_type_Three}
\end{figure}


\begin{theorem}\label{order}
Let $K$ be a FSD. 
For any  $x\in K$, $Ord(x,K)\le 4$.\\
If $K$ is of type {\bf D} and for any $\bi\in I^*$, $x\notin S_\bi(\dd K)$, then $Ord(x,K)\le 3$.\\
If $K$ is of type {\bf D3} and $x\in \dd K_\bi$ for some $\bi\in I^*$ then  $Ord(x, K)\le 3$.
\end{theorem}

\begin{proof}\label{proof:point_branching}
Let $Ord(x,K)=l$.
Then there are $l$ Jordan arcs $\{\ga_i\}_{i=1}^l$  in $K$, with a common endpoint $x$ and such that the arcs $\ga_i\mmm \{x\}$   are disjoint.
There is an integer $p>0$, such that for each $\ga_i$, the diameter $|\ga_i|$ is greater than $\sqrt{2}n^{-p}$.
Bearing in mind Lemma \ref{quadruples}, one sees  that there are 1, 2 or 3 multi-indices  $\bi=i_1i_2..i_p$ such that  $x\in S_\bi(K)$.

In the case (1), when such $\bi$ is unique, each arc $\{\ga_i\}$ passes to one of the neighbour squares of  order $n^p$ for $K_\bi$ through one of the points  of $\dd K_\bi$ (see Figure \ref{fig:case1}). 
Thus the number $l$ is no greater than the number of neighbour squares for $K_\bi$  and  the  number of the points of $\dd K_\bi$. 

A similar argument applies to the case (2) when $x\in K_\bi\cap K_\bj$ and lies on the interior of a common side of squares $S_\bi(P)$ and $S_\bj(P)$ (see Figure \ref{fig:case2}) and to the case (3) when $x$ is a common corner point of two or three squares (see Figure \ref{fig:case3}). 
In these cases the number $l$ is no greater than the number of neighbour squares for those $K_\bi$, that contain $x$ and  respective points of their boundaries.\textbf{(1)}
Let $x\in \dot K_\bi$.
For the types {\bf A, B, C} the number $l\leq 4$.

For the type {\bf D} the number $l\le 3$. 
Indeed, choosing any quadruple among 6 possible neighbors of a piece $K_\bi$, we get a forbidden combination. 
\begin{figure}[H]
    \centering
    \includegraphics[width=0.75\textwidth]{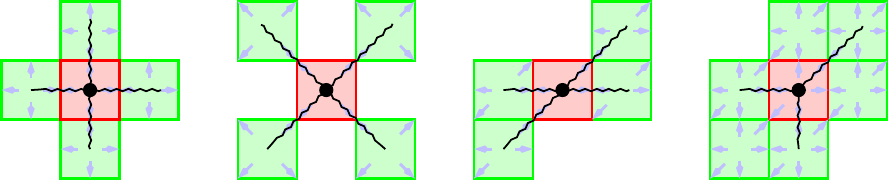}
    \caption{The orders of interior points for the types \textbf{A}, \textbf{B}, \textbf{C} and \textbf{D}}
    \label{fig:case1}
\end{figure}

\begin{figure}[H]
    \centering
    \includegraphics[width=0.6\textwidth]{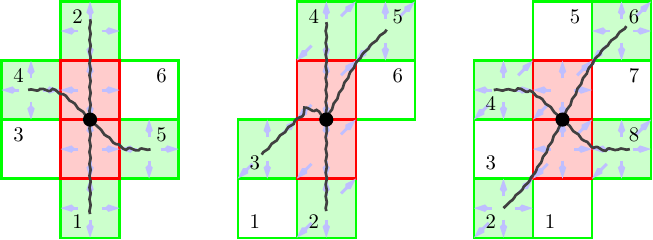}
    \caption{Accessible neighbors of a point $x$ on a common side for the types \textbf{A}, \textbf{C} and \textbf{D6}}. 
    \label{fig:case2}
\end{figure}

\textbf{(2)}
Let $x\in S_\bi(\dd K)$ be an inner point of a common side  of $S_\bi(P)$ and $S_\bj(P)$.
This is possible in the cases \textbf{A}, \textbf{C} and \textbf{D6} only.

In these cases the number of accessible neighbors of $S_\bi(P)\cup S_\bj(P)$ cannot exceed $4$ by Lemma \ref{quadruples}.

\begin{figure}[H]
    \centering
    \includegraphics[width=0.8\textwidth]{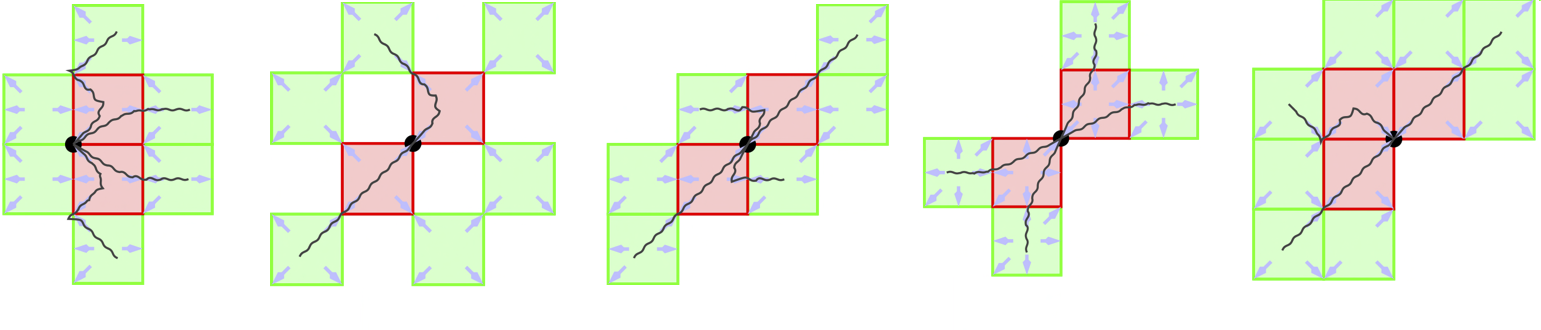}
    \caption{The orders of corner points for   {\bf A, B, C, D6, D3}.}
    \label{fig:case3}
\end{figure}

\textbf{(3)}
Let $x$ be a vertex of $S_\bi(P)$.
In the cases \textbf{A},  \textbf{B}, \textbf{C} and \textbf{D6} the point $x$ belongs to at most two pieces $K_\bi, K_\bj$, and in the case \textbf{D3} the point $x$ can belong to at most 3 pieces $K_\bi, K_\bj, K_\bk$ as it is shown on the Figure \ref{fig:case3}.

By Theorem \ref{thm:vertex_branching}, the corner point $x$ has $Ord\leq2$ in each of the pieces.
Moreover, for type \textbf{B} and \textbf{D3} the order of corner points is  1.

Consequently, for the corner point $Ord(x,K)\leq4$ in cases {\bf A, C, D6}, $Ord(x,K)\leq3$ in case {\bf D3} and 
$Ord(x,K)\leq2$ in case {\bf B}.
\end{proof}

\section{Classification theorem for fractal square dendrites}

\begin{proposition}\label{lem:d4bound}
Suppose $K$ is of the type  {\bf D6}  and $\gamma$ is its main tree.\\ 
 (i)  for any $x\in\dd K$, $Ord(x,\gamma)\leq2$;\\
(ii) for any $x\in\ga$, $Ord(x,K)\leq3$.
\end{proposition}

\begin{proof}
(i). For the corner points of $K$ the inequality $Ord(x,\gamma)\leq2$ follows from Proposition \ref{thm:corner}.

For non-corner points $x\in\dd K$ the argument of the proof of Theorem \ref{order} implies that the order of $x$ with respect to $\ga$ is less or equal to the number of neighbors of the piece $K_i$. One can see from the Figure
\ref{fig:case2} that any piece $K_i$, marked on this Figure by red color, can have at most 2 neighbors, otherwise a forbidden set of digits would  appear. 
Thus $Ord(x,\gamma)\leq Ord(x,K)\leq2$.

\begin{figure}[H]
    \centering
    \includegraphics[width=0.75\textwidth]{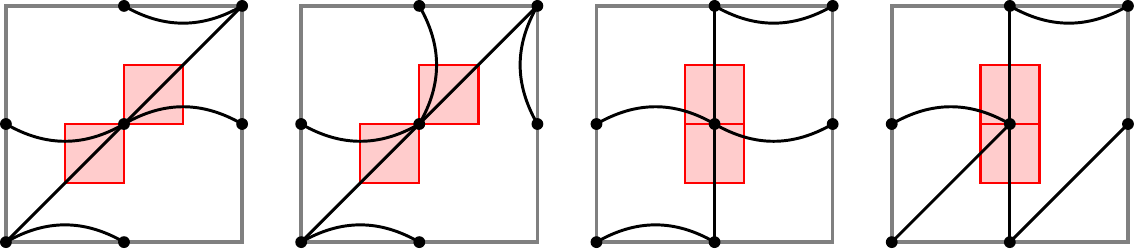}
    \caption{ 
  Imaginable arrangements for the main tree $\ga$ under the assumption that $Ord(x,\gamma)=4$ for the type {\bf D6}. The point $x$ is in the center.}
    \label{fig:d6ord3full}
\end{figure}

(ii)  Suppose there is $x\in\ga$ such that $Ord(x,\gamma)=4.$
Theorem \ref{order} implies that  $x=S_i(K)\cap S_j(K)$ for some $i,j\in I$.\\
Consider first the case when $x$ is a common corner point of two pieces $S_i(K)$ and $S_j(K)$,
so $x=S_i(b)=S_j(a)$, where $a=(0,0)$ and $b=(1,1)$. Denote also $F_{0,-1}=\{c\}$,
$F_{0,1}=\{d\}$,
$F_{-1,0}=\{e\}$, $F_{1,0}=\{f\}$.

\begin{figure}[H]
    \centering \includegraphics[width=0.5\textwidth]{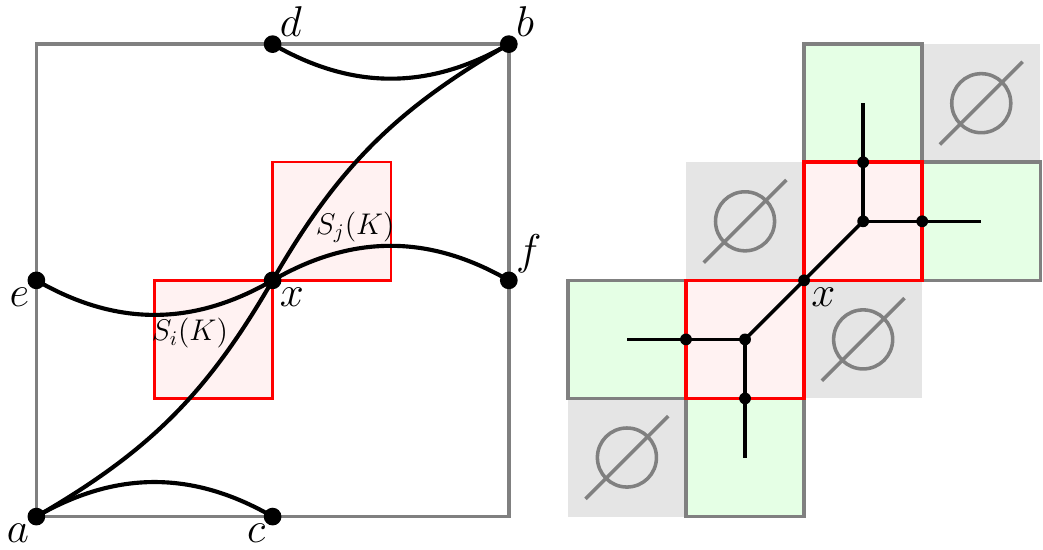}
\end{figure}

For the points $a,b$ it follows from Theorems \ref{thm:vertex_branching}, \ref{order} that $Ord(a,\ga)=Ord(b,\ga)=2$. 
Therefore the set $\ga\mmm\{a,b\}$  consists of three components. 
We denote the closures of these components by $\ga_0$, $\ga_a$, $\ga_b$, so that $\ga_0\cap\ga_a=\{a\}$ and $\ga_0\cap \ga_b=\{b\}$. The component $\ga_0$ contains the arc $\ga(a,b)$ which divides $P$ into two parts. One of these contains the points $c$ and $f$, the other contains
$e$ and $d$, therefore $x\in \ga(a,b)$ and the set $\ga_0$ contains one of the pairs of points $c,d$ or $c,f$ or $e,f$ or $e,d$.

Consider the case when 
$\ga_0\NI\{e,f\}$. Then
$\ga_a=\ga(a,c)$ and $\ga_b=\ga(b,d)$.

Then the set $\ga_0$ is the union of subarcs $\ga(a,x)$, $\ga(f,x)$, $\ga(b,x)$ and $\ga(e,x)$.

These four subarcs intersect the set $S_i(\dd K)\cup S_j(\dd K)$ at four different points $S_i(c)$, $S_i(e)$, $S_j(f)$, $S_j(d)$. 
Therefore the  intersections of these four subarcs with $S_i( K)\cup S_j(K)$ should be different subarcs $S_i(\ga(c,b))$, 
 $S_i(\ga(e,b))$, $S_j(\ga(f,a))$, $S_j(\ga(d))$ whose interiors lie in different components of $\ga_0\mmm\{x\}$. 
This is impossible, because $\ga(a,f)\cap\ga(a,d)=\ga(a,x)$.\\

Consider now the case when $x$ lies on a common side of two pieces $S_i(K)$ and $S_j(K)$. In this case
$F_{0,-1}=\{c\}$,
$F_{0,1}=\{d\}$
so $x=S_i(d)=S_j(c)$.  Denote also $F_{-1,-1}=\{a\}$,
$F_{1,1}=\{b\}$,
$F_{-1,0}=\{e\}$, $F_{1,0}=\{f\}$.

For the points $c,d$ it follows from Theorems \ref{thm:vertex_branching}, \ref{order} that $Ord(c,\ga)=Ord(d,\ga)=2$. 

Therefore the set $\ga\mmm\{c,d\}$  consists of three components. 
We denote the closures of these components by $\ga_0$, $\ga_c$, $\ga_d$, so that $\ga_0\cap\ga_c=\{c\}$ and $\ga_0\cap \ga_d=\{d\}$. The component $\ga_0$ contains the arc $\ga(c,d)$ which divides $P$ into two parts. One of these parts contains the points $b$ and $f$, the other contains
$e$ and $a$, therefore $x\in \ga(c,d)$ and the set $\ga_0$ contains one of the pairs of points $a,b$ or $a,e$ or $e,f$ or $b,f$. 

\begin{figure}[H]
    \centering
    \includegraphics[width=0.5\textwidth]{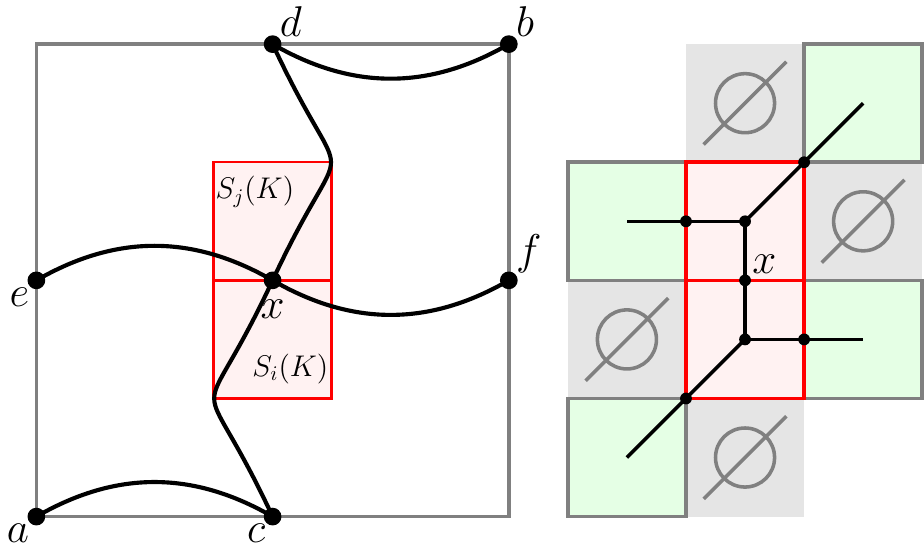}
\end{figure}

Consider the case when 
$\ga_0\NI\{e,f\}$. Then
$\ga_c=\ga(a,c)$ and $\ga_d=\ga(b,d)$.

Then the set $\ga_0$ is the union of subarcs $\ga(c,x)$, $\ga(f,x)$, $\ga(d,x)$ and $\ga(e,x)$.

These four subarcs intersect the set $S_i(\dd K)\cup S_j(\dd K)$ at four different points $S_i(e)$, $S_i(b)$, $S_j(f)$, $S_j(a)$. 
Therefore the  intersections of these four subarcs with $S_i( K)\cup S_j(K)$ should be different subarcs $S_j(\ga(c,b))$, 
 $S_j(\ga(c,e))$, $S_i(\ga(a,d))$, $S_j(\ga(d,f))$ whose interiors lie in different components of $\ga_0\mmm\{x\}$. 
This is impossible, because $\ga(a,d)\cap\ga(d,f)=\ga(d,x)$.
\end{proof}

\begin{theorem}\label{thm:7trees}
There are only 7 types of main trees for FSD's, as it is shown in the table below:
\end{theorem}

\begin{figure}[H]
    \centering \Large {\bf
    1. \includegraphics[width=0.13\textwidth]{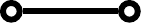}
    \hfill
    2. \includegraphics[width=0.13\textwidth]{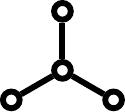}
    \hfill
    3. \includegraphics[width=0.12\textwidth]{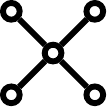}
    \hfill
    4. \includegraphics[width=0.13\textwidth]{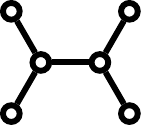}\\
    \smallskip
    5. \includegraphics[width=0.2\textwidth]{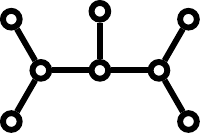}
    \hfill
    6. \includegraphics[width=0.25\textwidth]{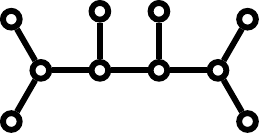}
    \hfill
    7. \includegraphics[width=0.18\textwidth]{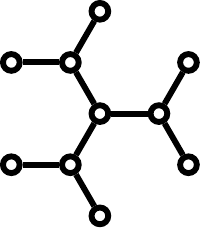}}
    \caption{Seven types of  main trees.}
    \label{fig:7trees}
\end{figure}


\begin{proof}
Let $K$ be a FSD with the main tree $\ga$. All its endpoints lie in 
$\dd K$.

Suppose $K$ belongs to the types {\bf A} or {\bf C}. By Theorem \ref{ssboundary}, the main tree $\gamma$ in this case can contain 2, 3 or 4 end points, which gives  the patterns  
1, 2, 3 and 4 on the Figure \ref{fig:7trees}.

If $K$ belongs to the type {\bf B},
by Theorems \ref{ssboundary} and  \ref{thm:corner} the main tree has exactly four endpoints hence it has the patterns   3 and 4.

If $K$ is of type {\bf D3}, then by Theorems \ref{ssboundary} and \ref{thm:corner} the main tree $\gamma$ has exactly $3$ endpoints, hence it  has the pattern 2.

If $K$ belongs to the type {\bf D6}, then by Theorem \ref{ssboundary} and  Lemma \ref{lem:d4bound} the main tree $\gamma$ may havedpoints fron 2 to 6 and for any  $x\in \gamma $, $ Ord(x,\gamma)\leq3$, therefore $\gamma$ cannot have the form different from patterns  1, 2, 4, 5, 6 or 7 from the Figure \ref{fig:7trees}.
\end{proof}

According to our search, the fractal square dendrites may be divided to 16 types depending on the form of their main tree and the placement of the points of $\dd K$ on subarcs of the main tree as it is  shown on the table below.

\begin{figure}[H]
    \centering
    \includegraphics[scale=1.2]{mt1.pdf}
    \vspace{0.3cm}
    \vfill
    \fbox{
    \includegraphics[width=0.22\textwidth]{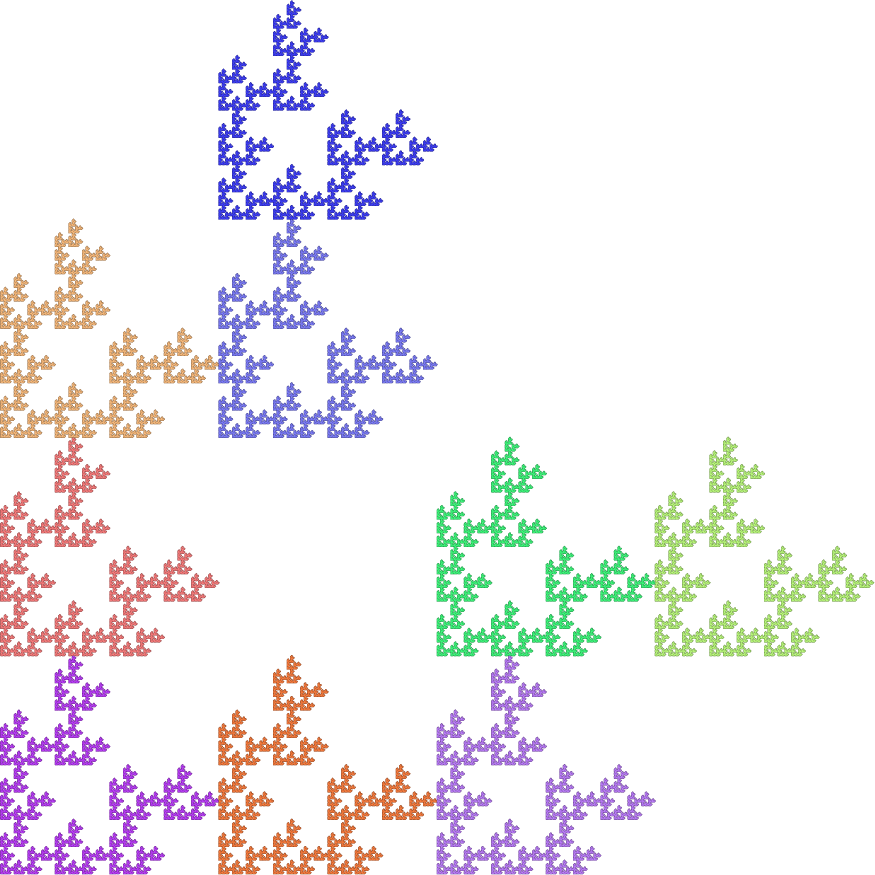}
    \includegraphics[width=0.22\textwidth]{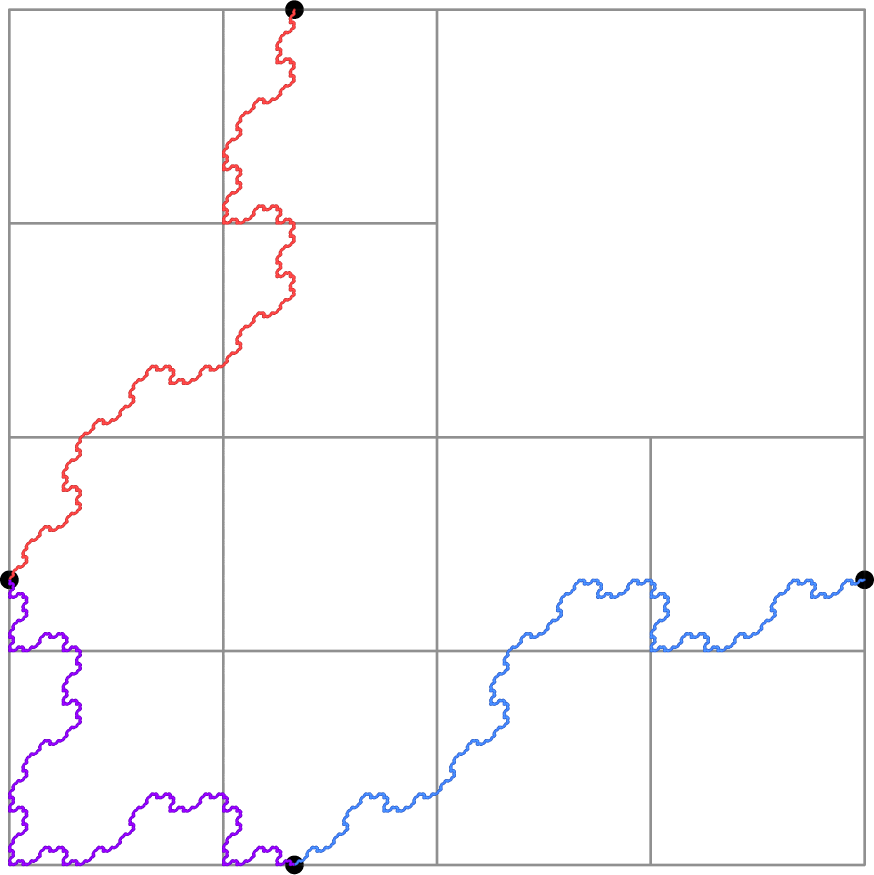}}
    \hfill
    \fbox{
    \includegraphics[width=0.22\textwidth]{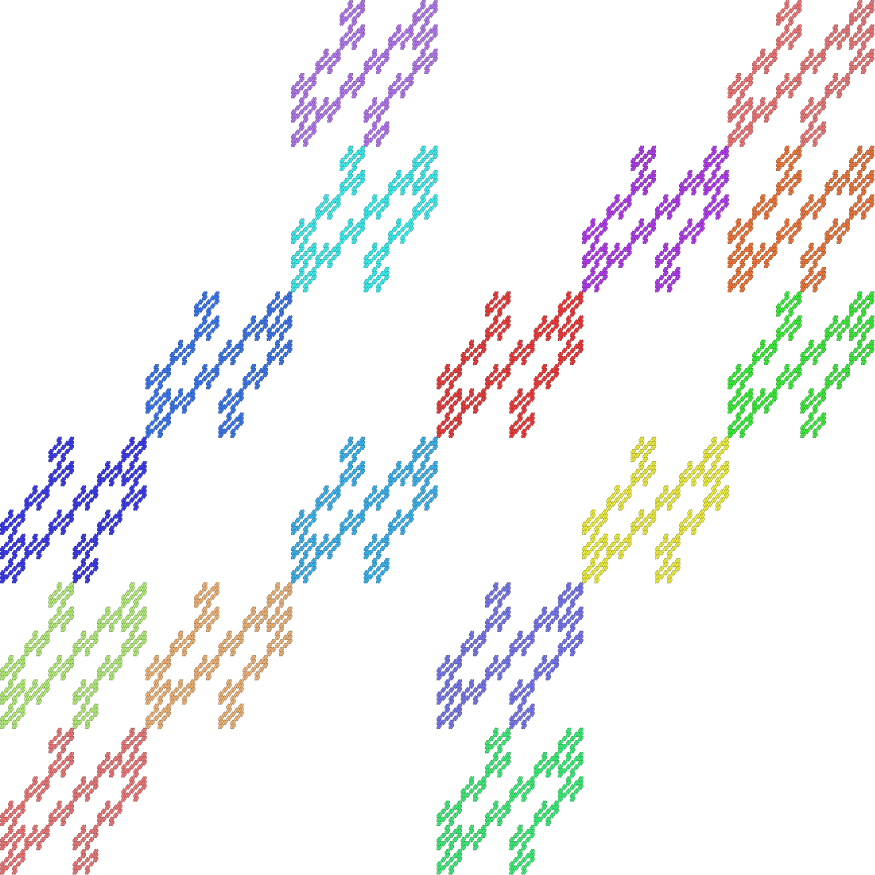}
    \includegraphics[width=0.22\textwidth]{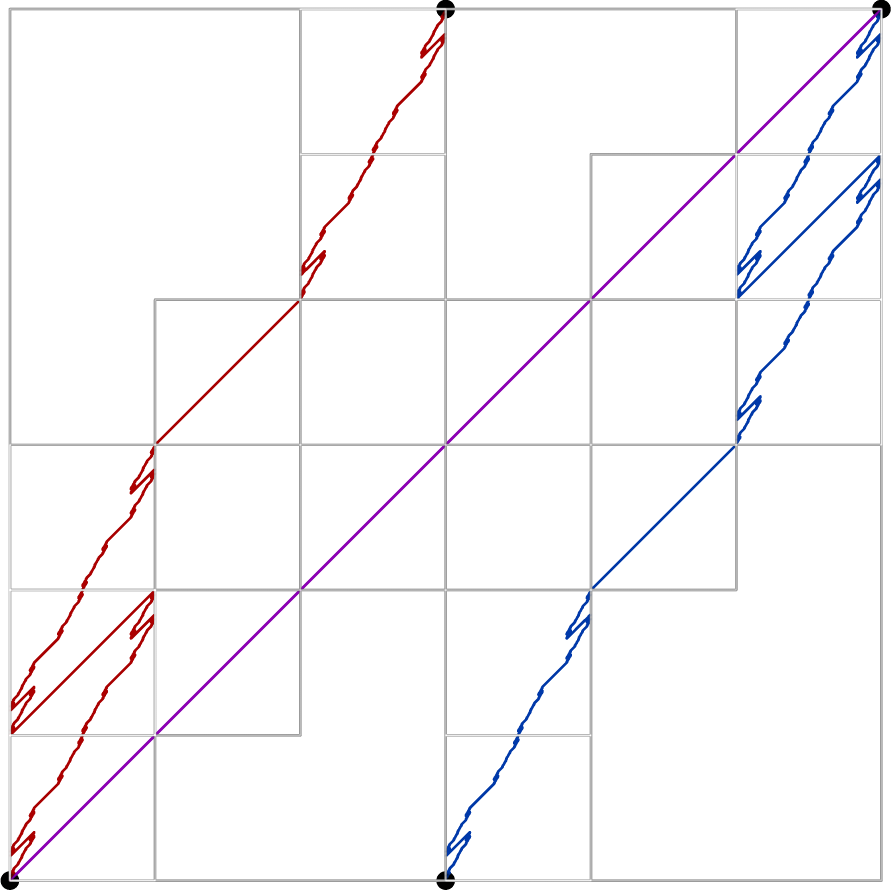}}
    \caption{Classes 1 and 2. The tree types are \textbf{1-A} and \textbf{1-C}. }
    \label{fig:tree1}
\end{figure}

\begin{figure}[H]
    \centering
    \includegraphics[scale=1.2]{mt2.pdf}
    \vspace{0.3cm}\vfill
    \fbox{
    \includegraphics[width=0.22\textwidth]{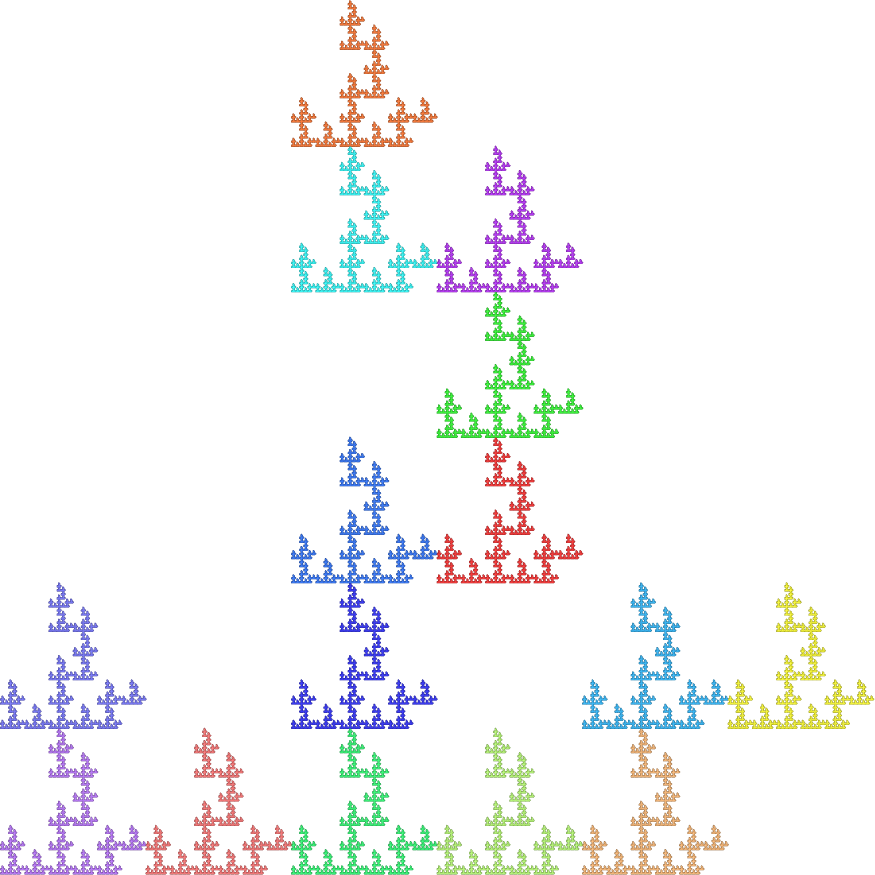}
    \includegraphics[width=0.22\textwidth]{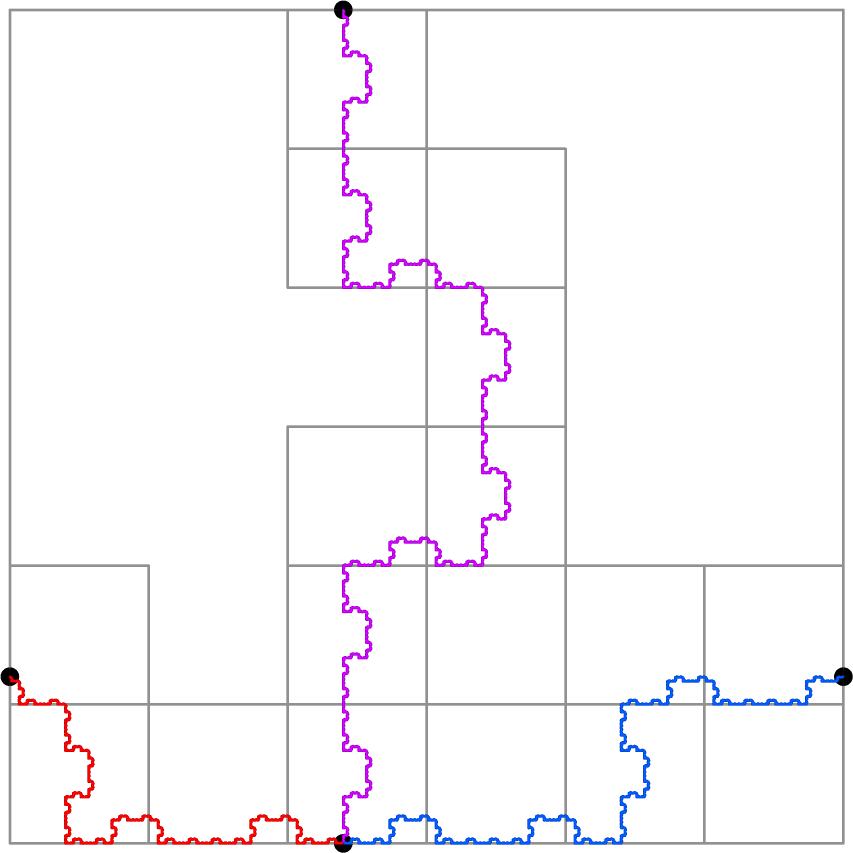}}
    \hfill
    \fbox{
    \includegraphics[width=0.22\textwidth]{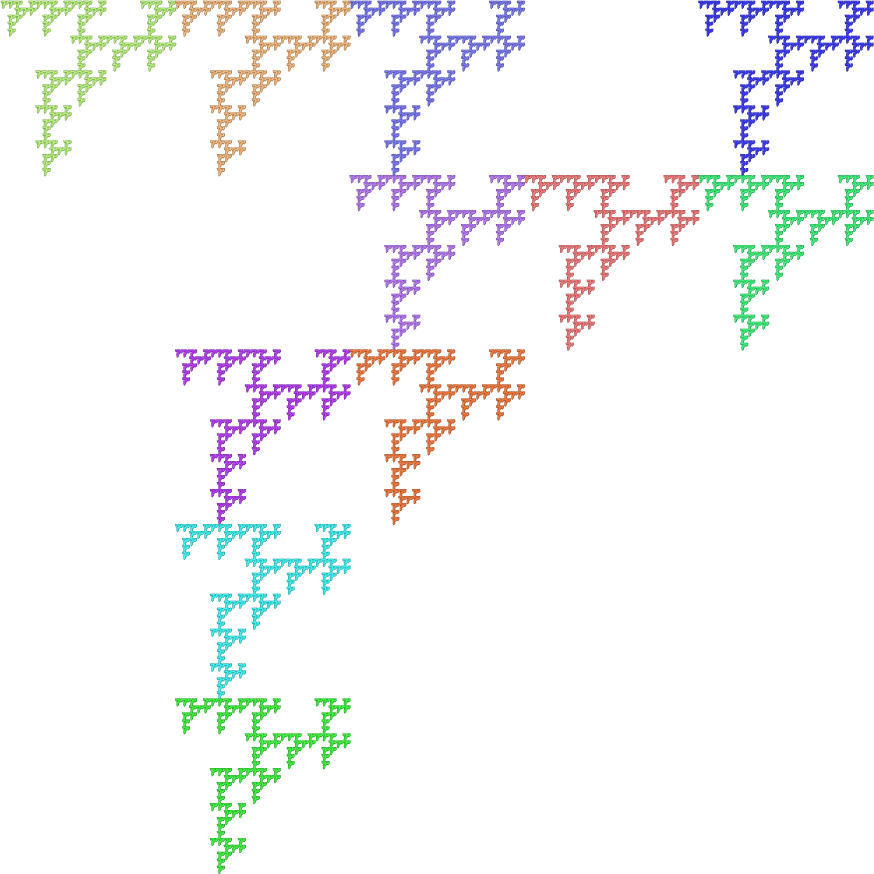}
    \includegraphics[width=0.22\textwidth]{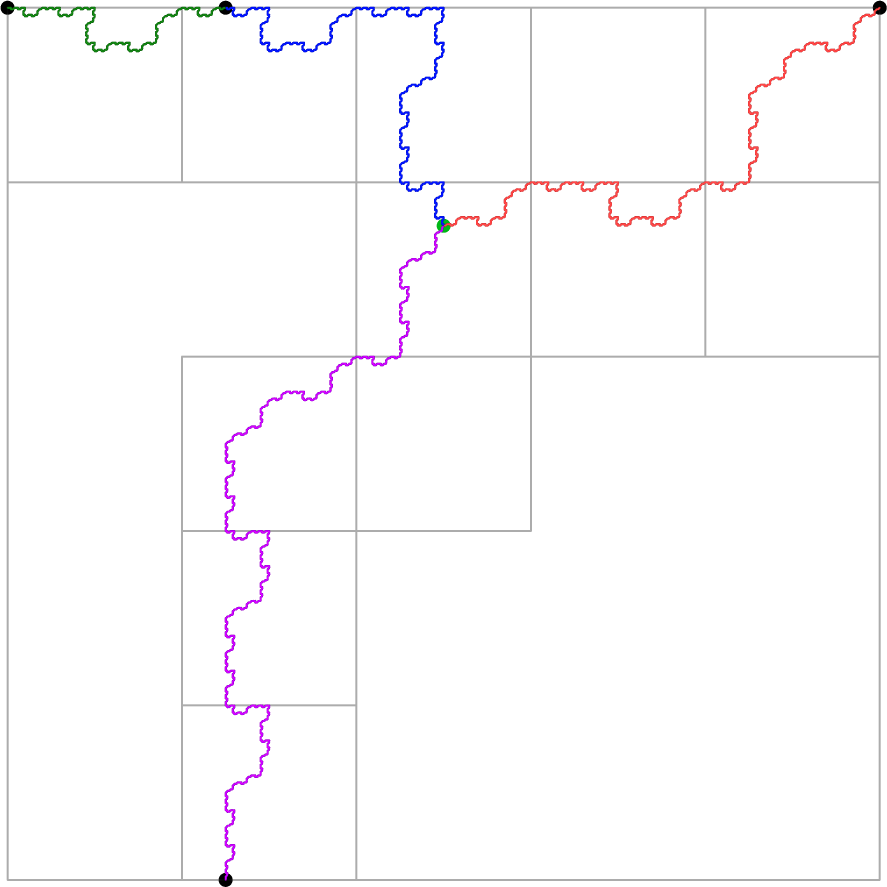}}
    \vspace{0.3cm}\vfill
    \fbox{
    \includegraphics[width=0.22\textwidth]{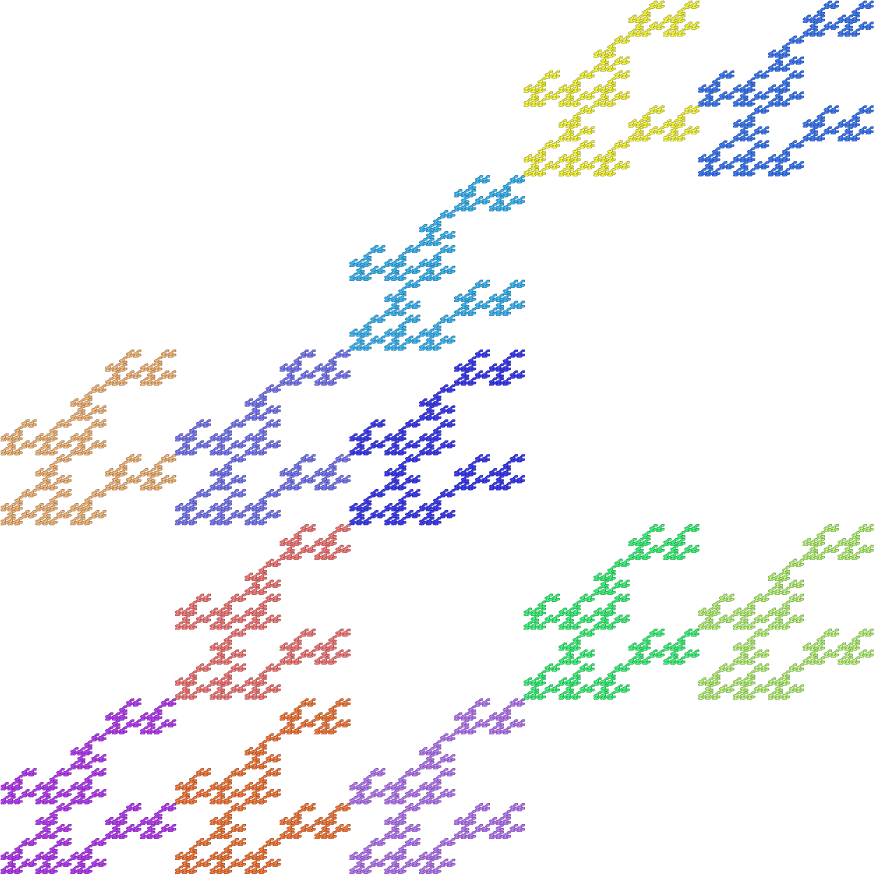}
    \includegraphics[width=0.22\textwidth]{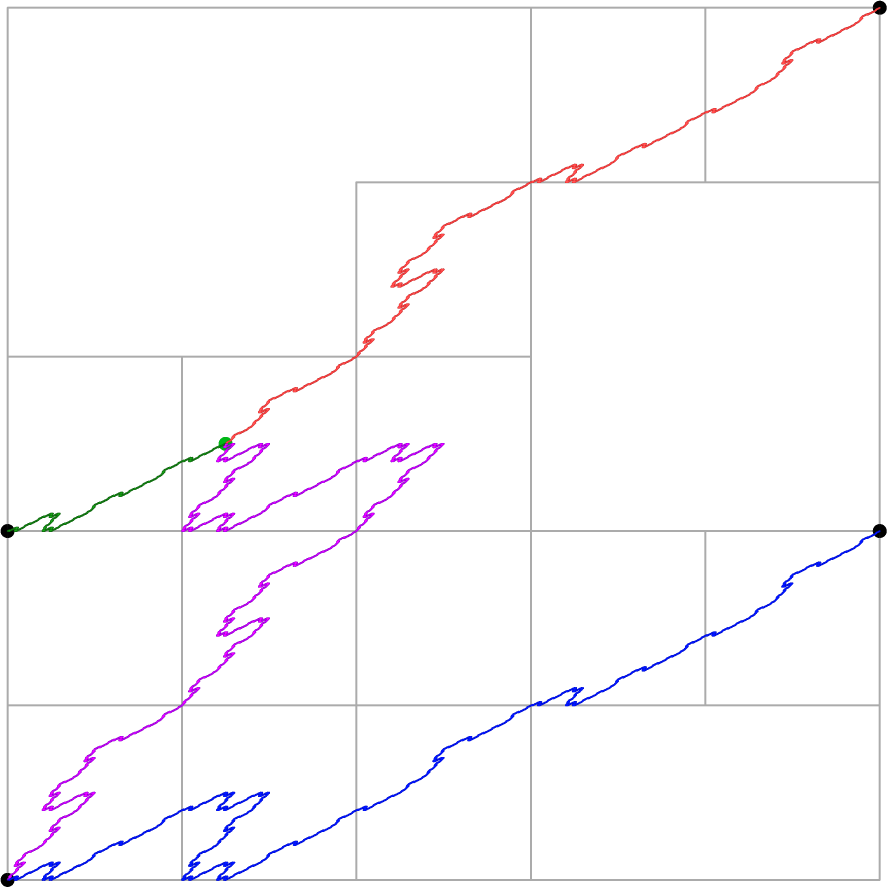}}
    \hfill
    \fbox{
    \includegraphics[width=0.22\textwidth]{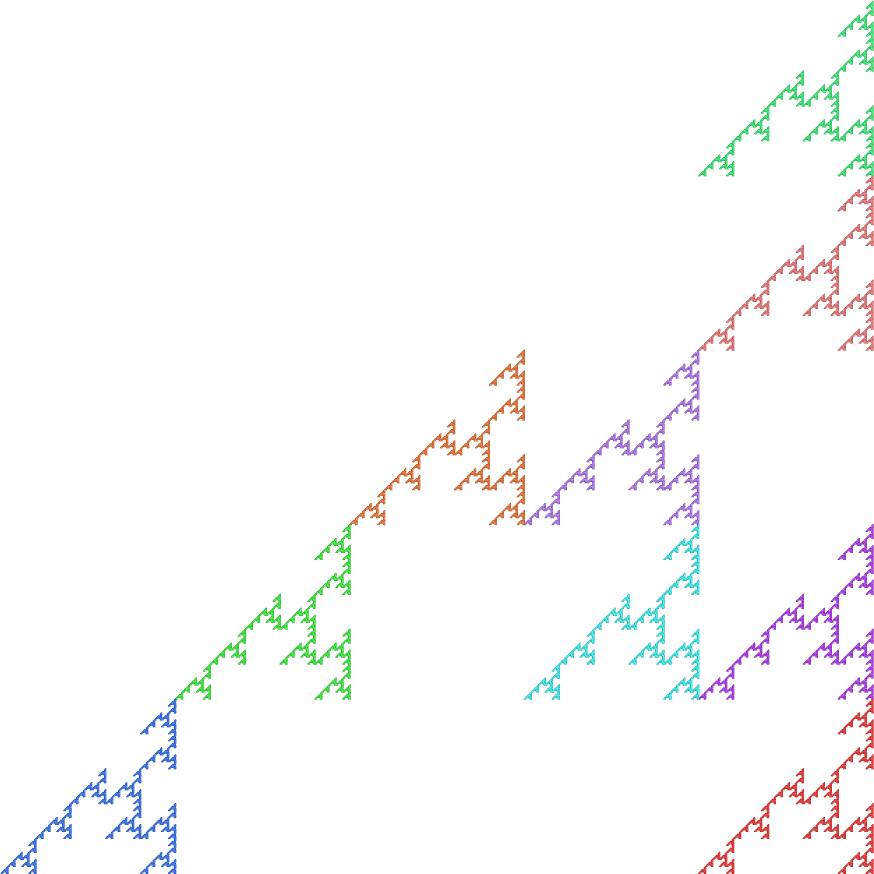}
    \includegraphics[width=0.22\textwidth]{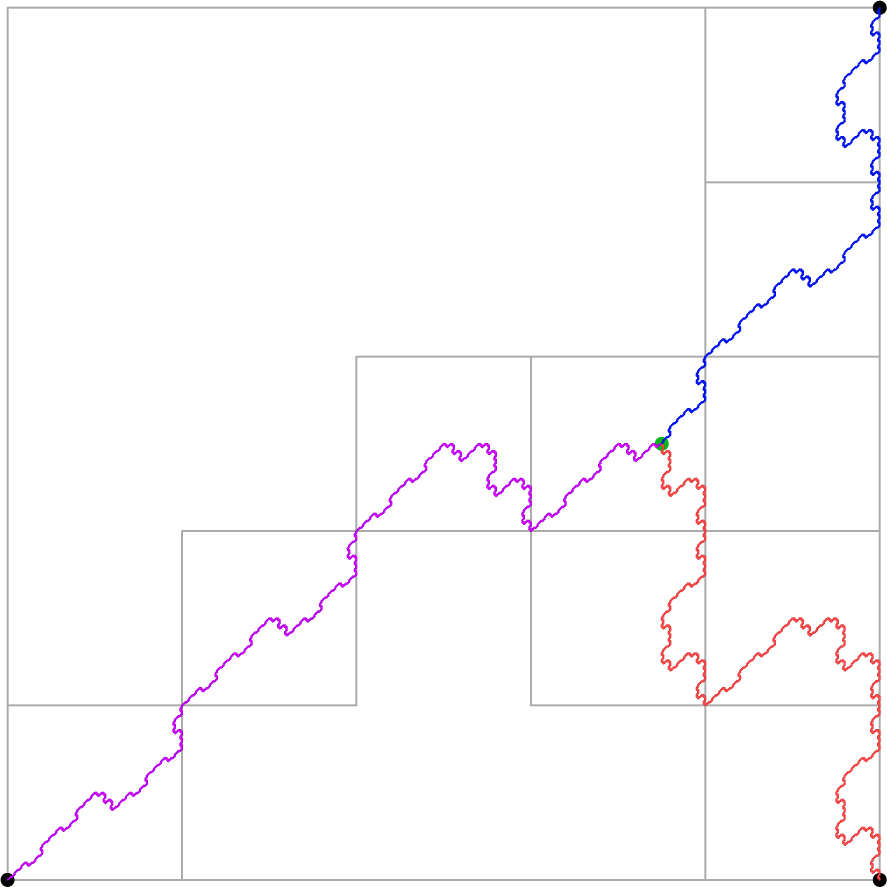}}
    \caption{Classes 3 ({\bf 2-A}), 4 ({\bf 2-A}), 5 ({\bf 2-D6}) and 6 ({\bf 2-D3})}
    \label{fig:tree2}
\end{figure}

\begin{figure}[H]
    \centering
    \hspace{0.12\textwidth}
    \includegraphics[scale=1.2]{mt3.pdf}
    \hfill
    \fbox{
    \includegraphics[width=0.22\textwidth]{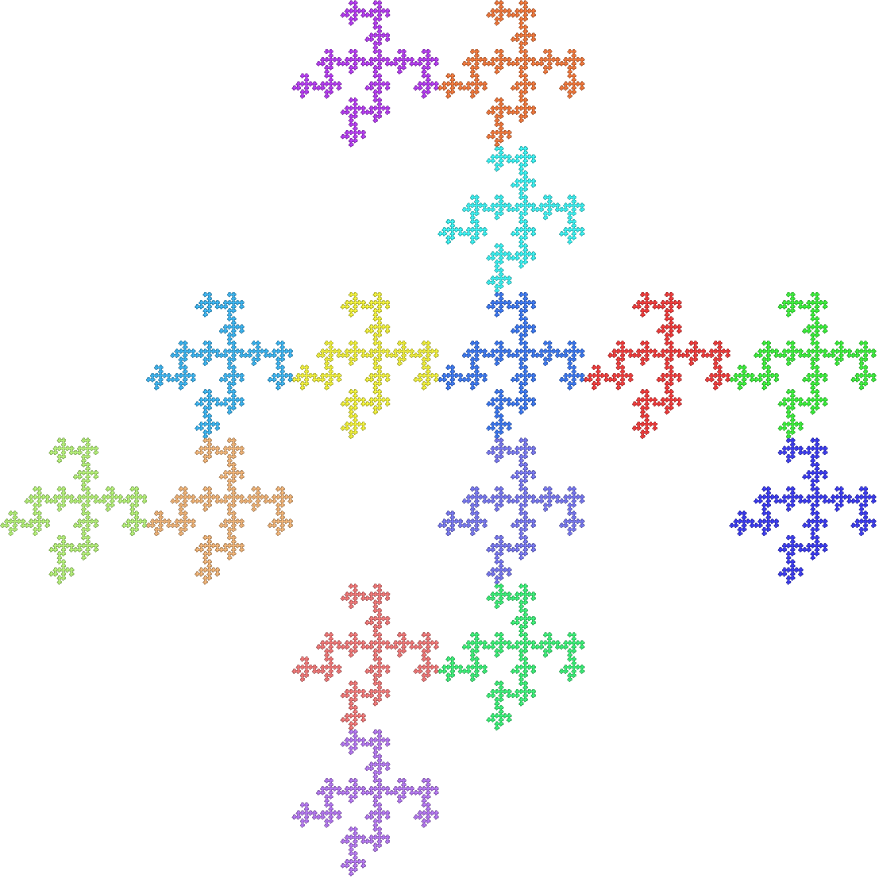}
    \includegraphics[width=0.22\textwidth]{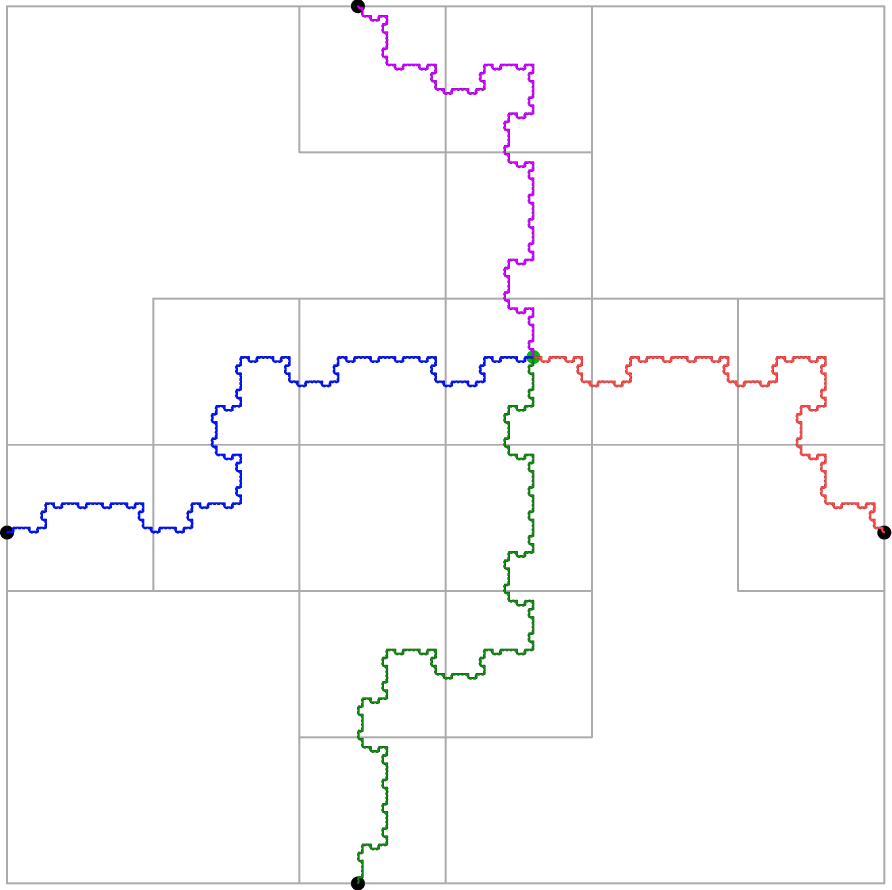}}
    \vspace{0.3cm}\vfill
    \fbox{
    \includegraphics[width=0.22\textwidth]{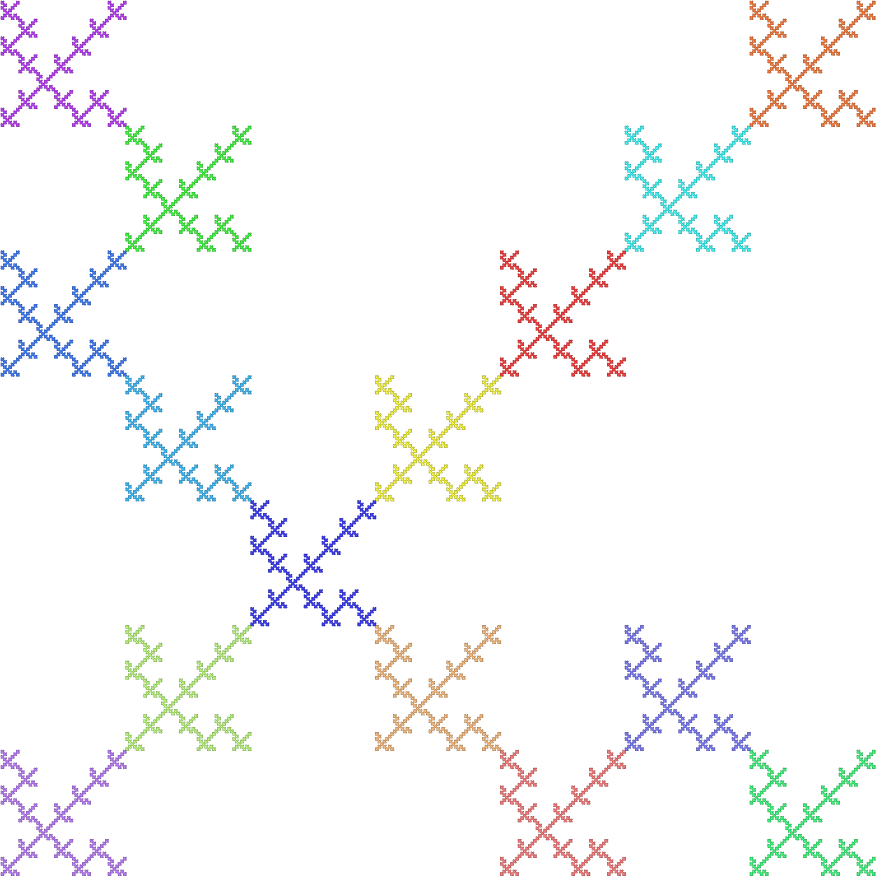}
    \includegraphics[width=0.22\textwidth]{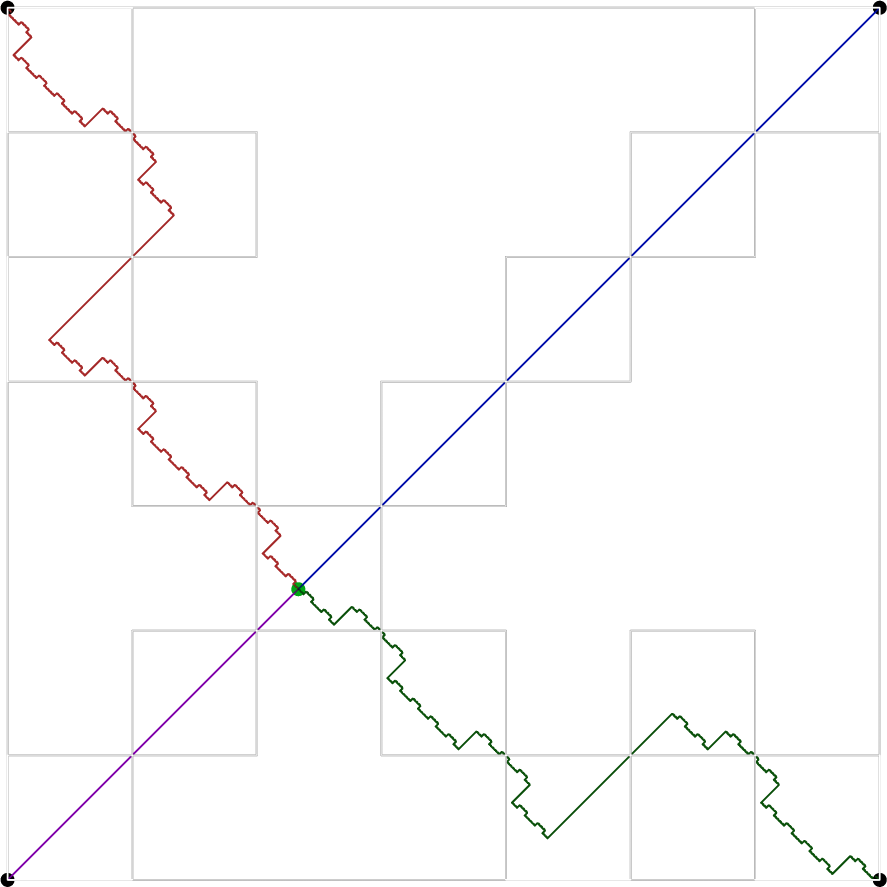}}
    \hfill
    \fbox{
    \includegraphics[width=0.22\textwidth]{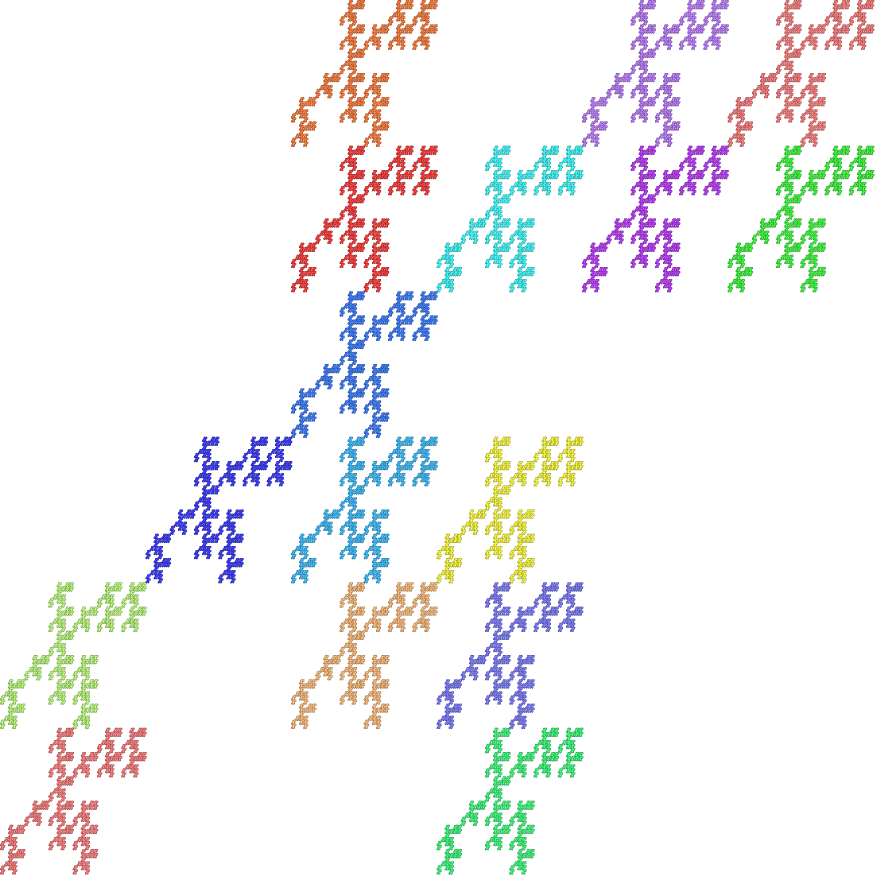}
    \includegraphics[width=0.22\textwidth]{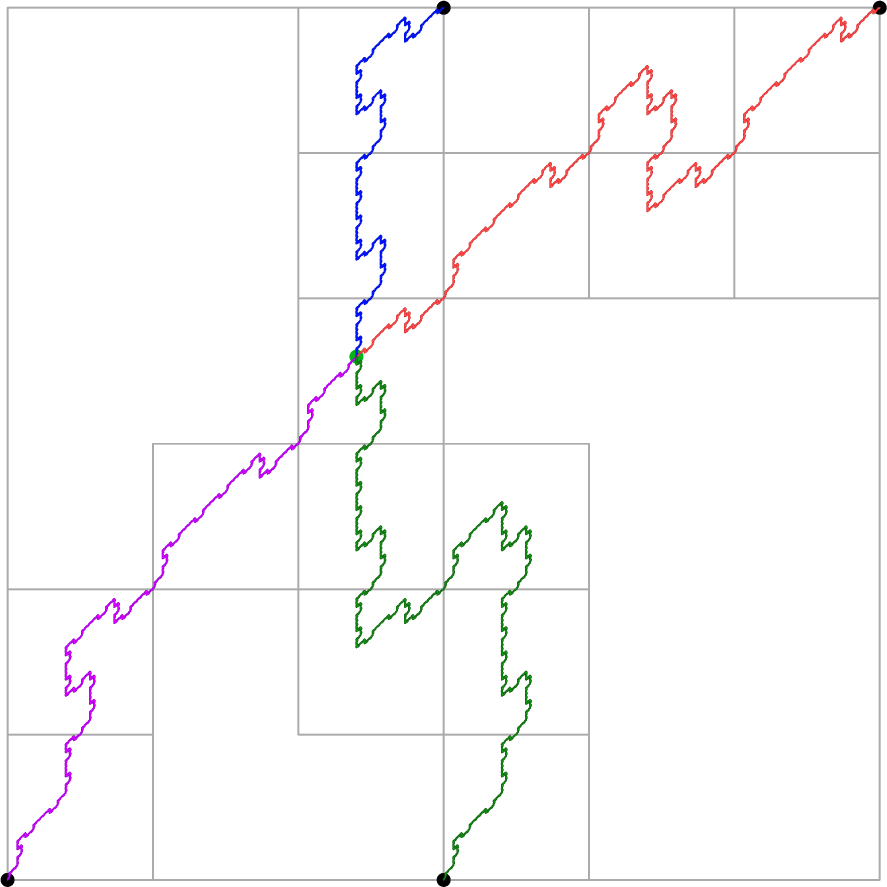}}
    \caption{Classes 7({\bf 3-A}), 8 ({\bf 3-B}) and 9({\bf 3-C})}
    \label{fig:tree3}
\end{figure}
 
\begin{figure}[H]
    \centering
    \includegraphics[scale=1.2]{mt4.pdf}
    \vspace{0.5cm}\vfill
    \fbox{
    \includegraphics[width=0.22\textwidth]{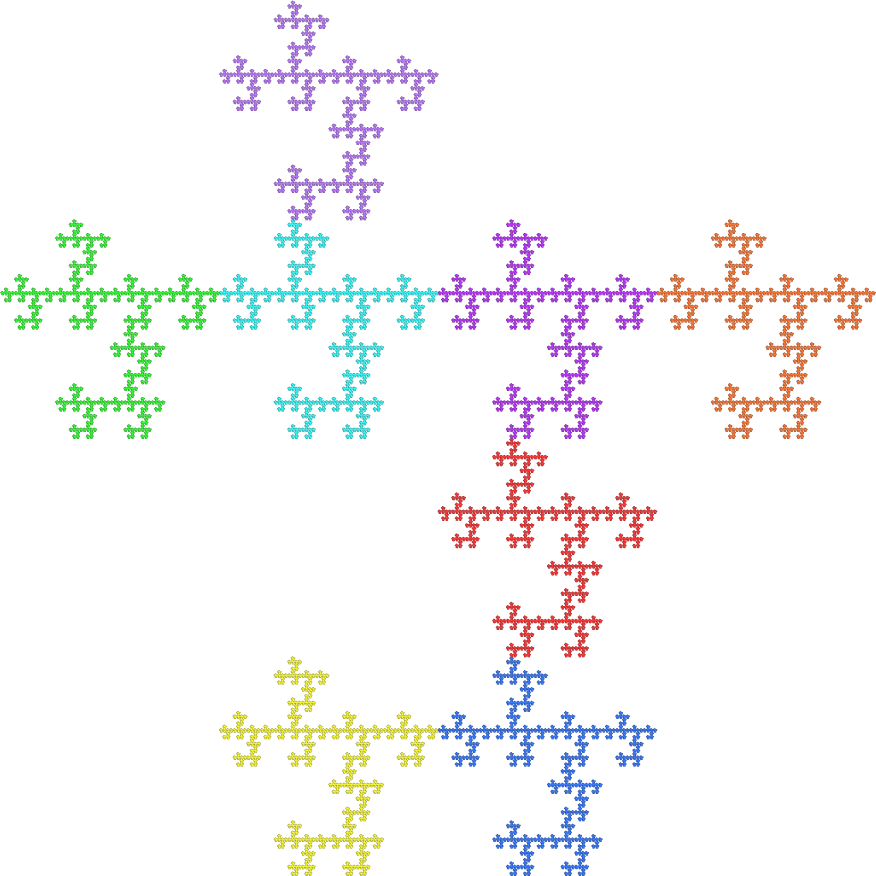}
    \includegraphics[width=0.22\textwidth]{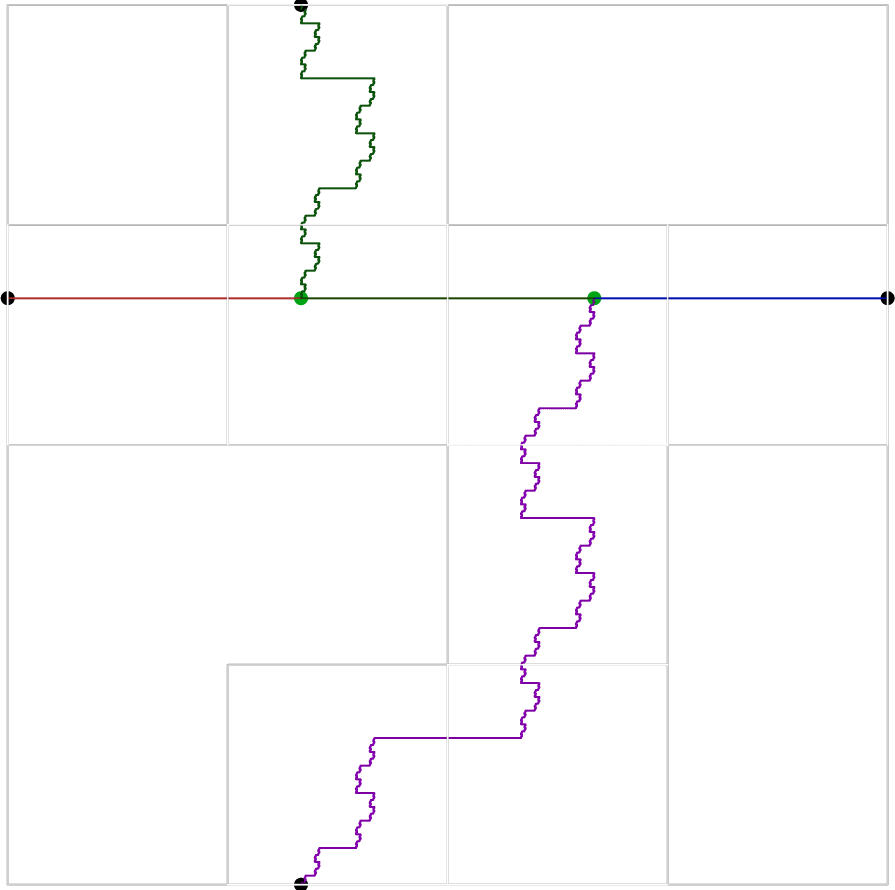}}
    \hfill
    \fbox{
    \includegraphics[width=0.22\textwidth]{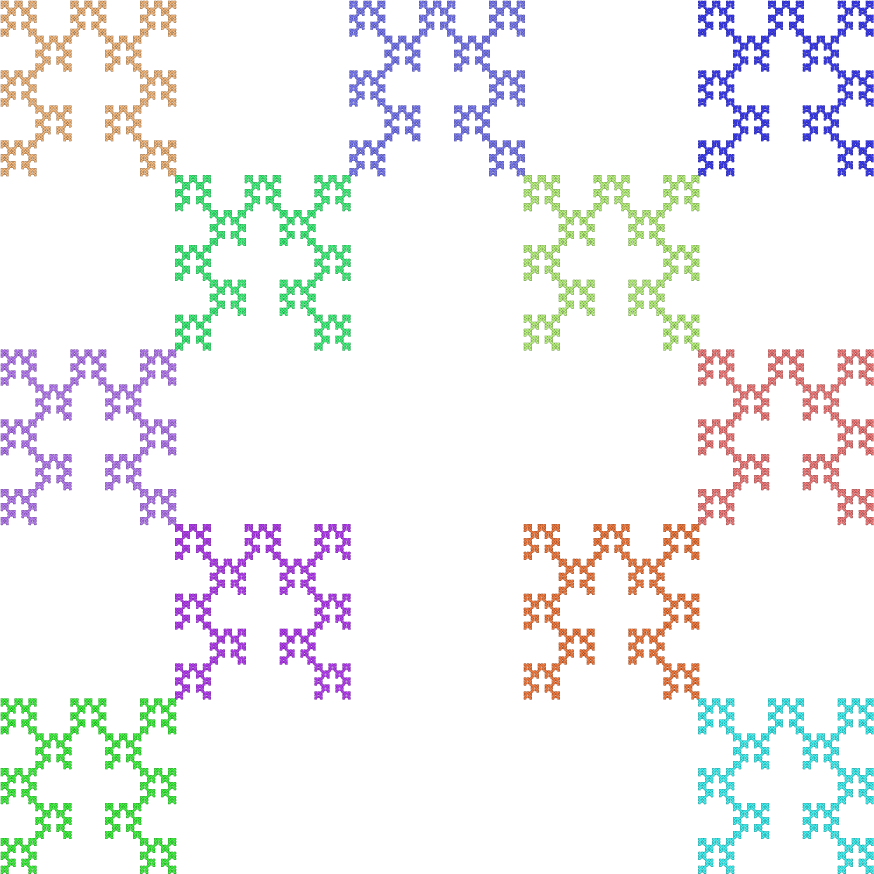}
    \includegraphics[width=0.22\textwidth]{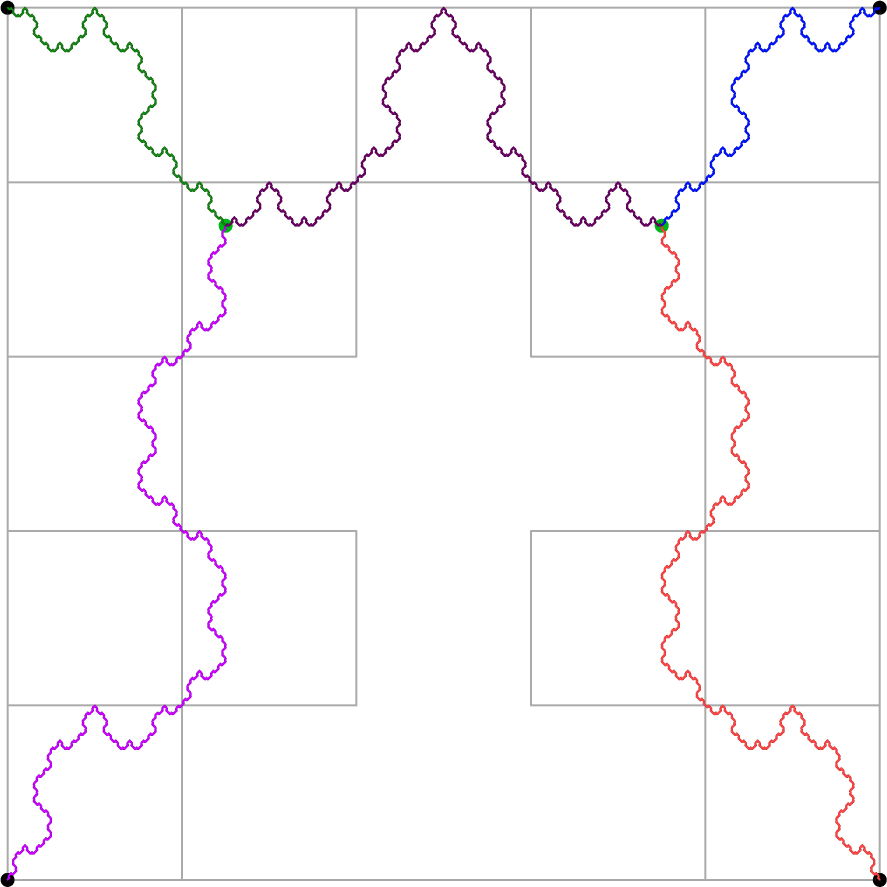}}
    \vspace{0.3cm}\vfill
    \fbox{
    \includegraphics[width=0.22\textwidth]{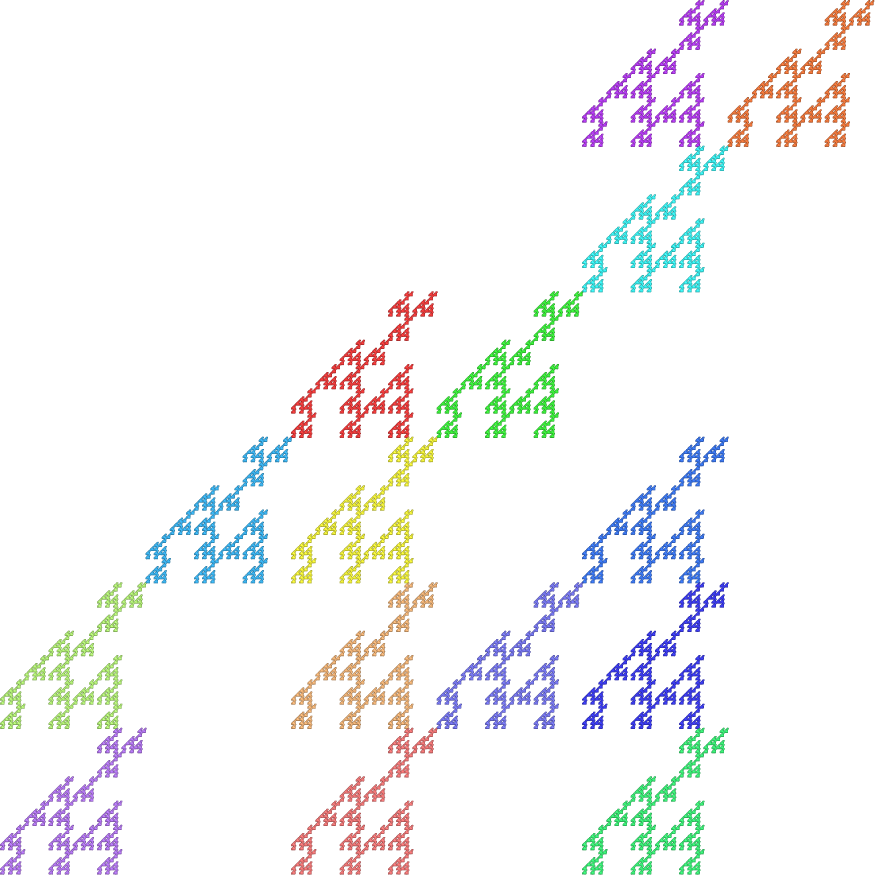}
    \includegraphics[width=0.22\textwidth]{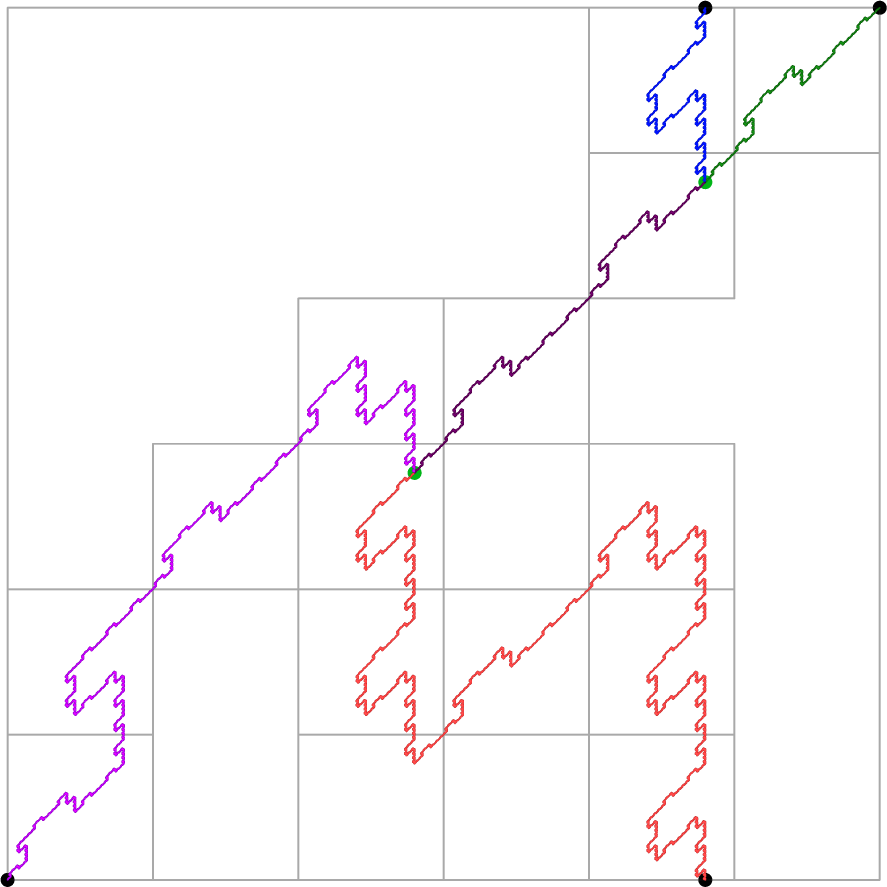}}
    \hfill
    \fbox{
    \includegraphics[width=0.22\textwidth]{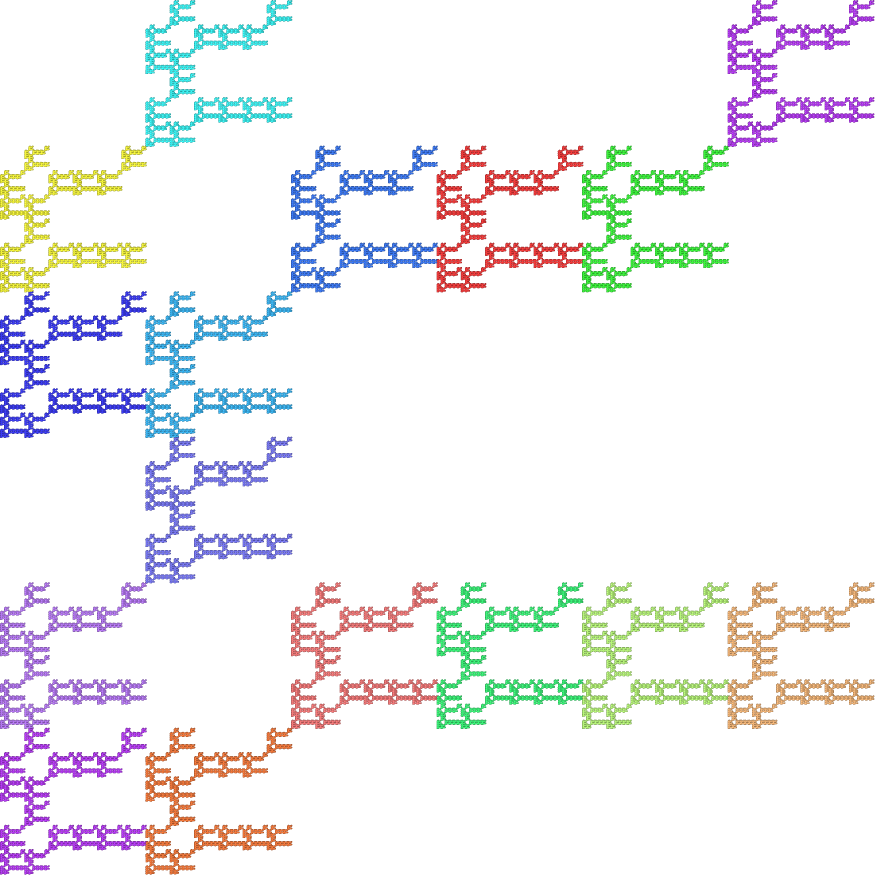}
    \includegraphics[width=0.22\textwidth]{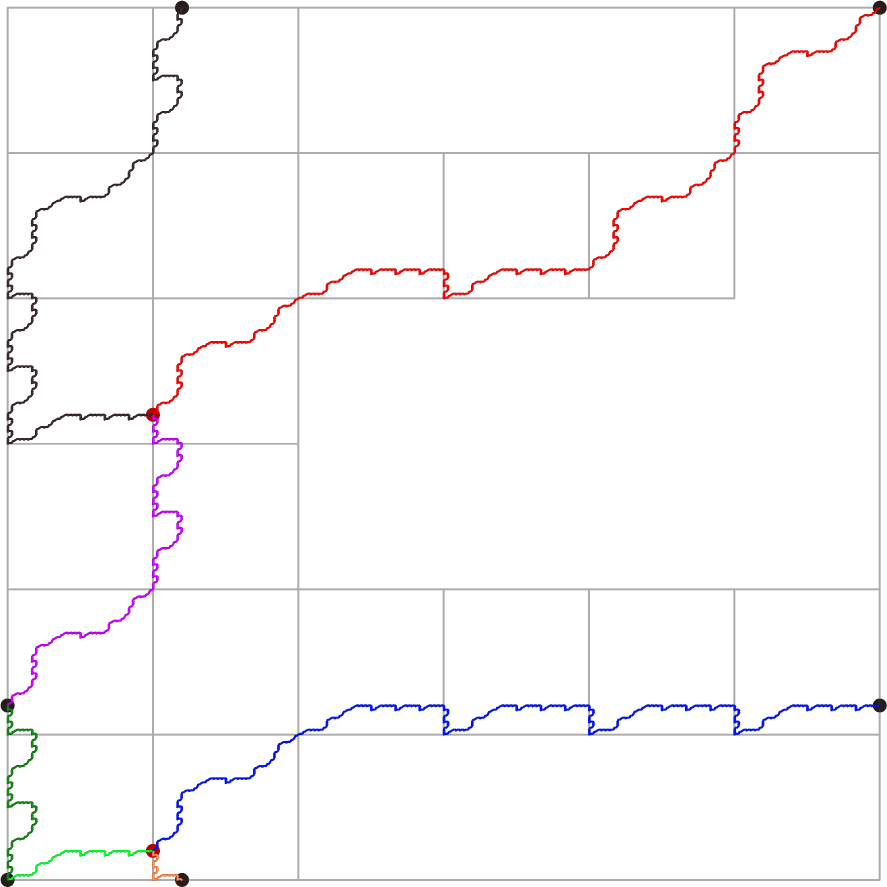}}
    \caption{Classes 10 ({\bf 4-A}), 11 ({\bf 4-B}), 12 ({\bf 4-C}) and 13 ({\bf 4-D6})}
    \label{fig:tree4}
\end{figure}

\begin{figure}[H]
    \centering
    \includegraphics[scale=1.1]{mt5.pdf}
    \hfill
    \fbox{
    \includegraphics[width=0.22\textwidth]{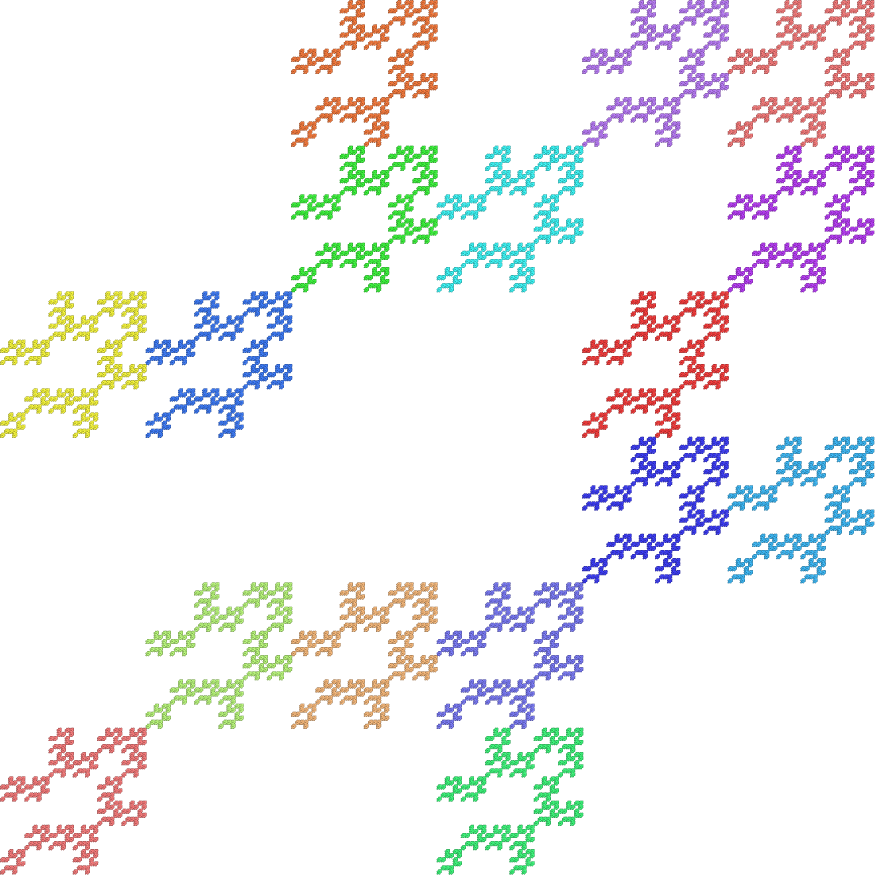}
    \includegraphics[width=0.22\textwidth]{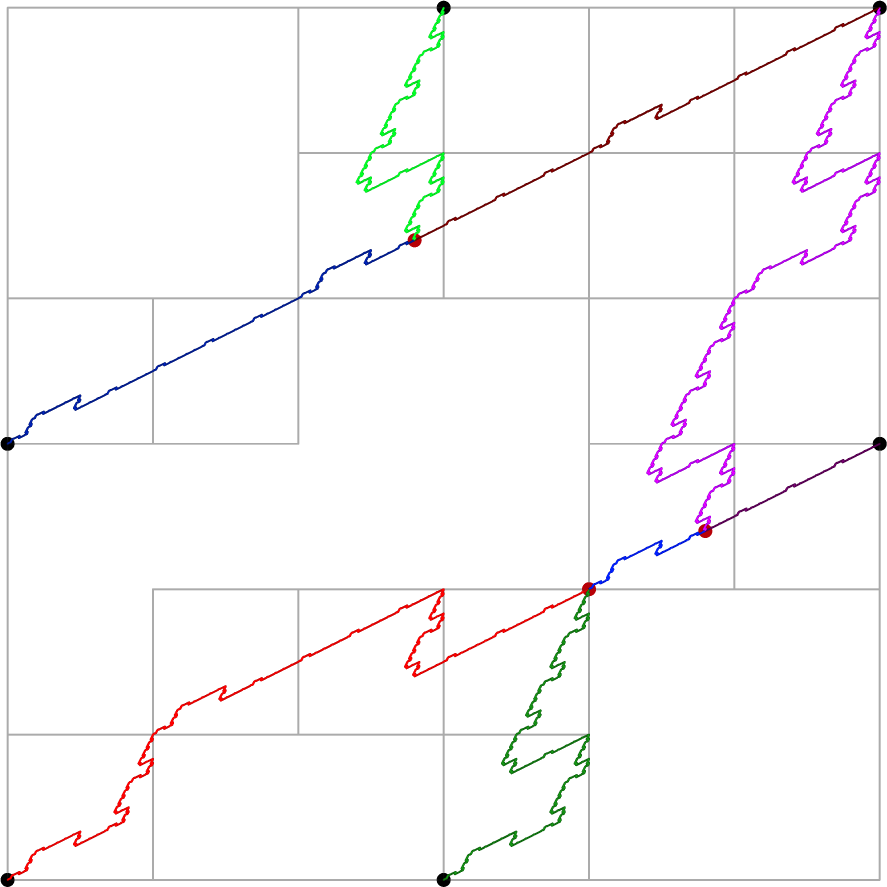}}
    \caption{Class 14 ({\bf 5-D6})}
    \label{fig:tree5}
\end{figure}

\begin{figure}[H]
    \centering
    \includegraphics[scale=1.1]{mt7.pdf}
    \hfill
    \fbox{
    \includegraphics[width=0.22\textwidth]{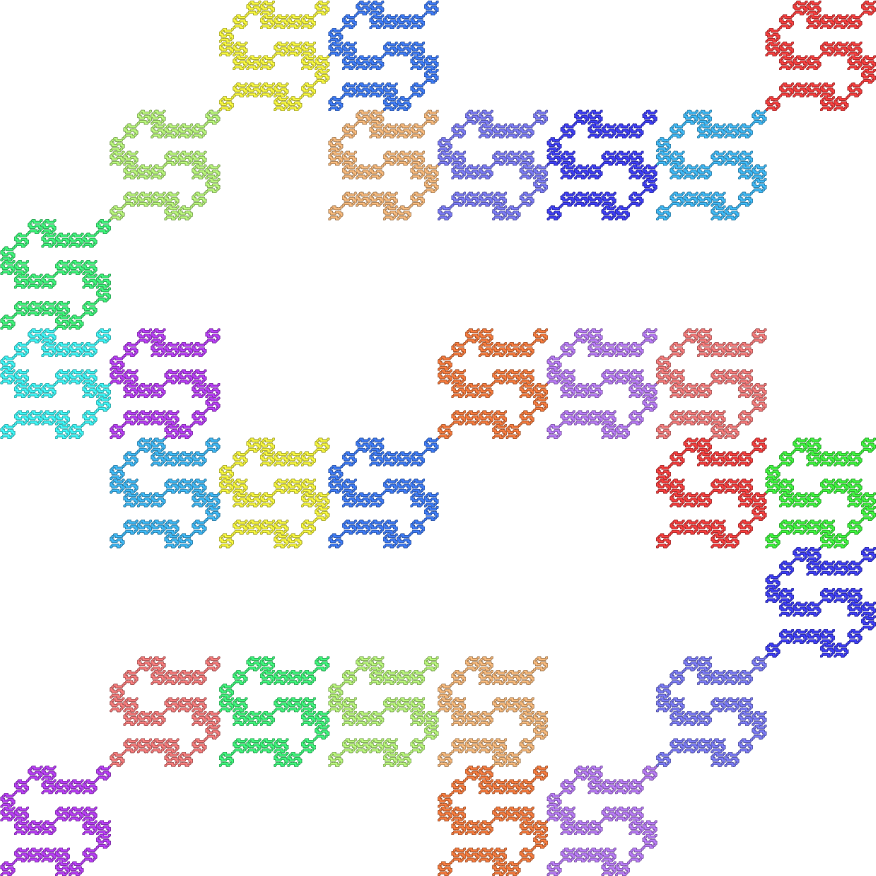}
    \includegraphics[width=0.22\textwidth]{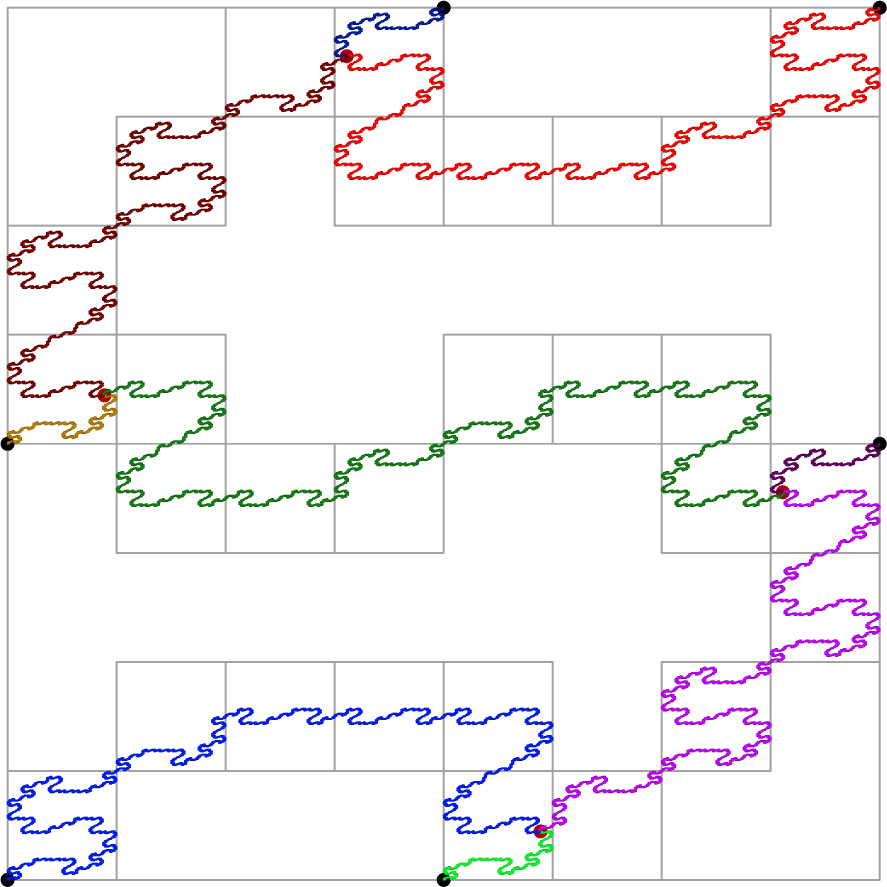}}
    \caption{Class 15 ({\bf 6-D6})}
    \label{fig:tree6}
\end{figure}

\begin{figure}[H]
    \centering
    \includegraphics[scale=1.1]{mt6.pdf}
    \hfill
    \fbox{
    \includegraphics[width=0.22\textwidth]{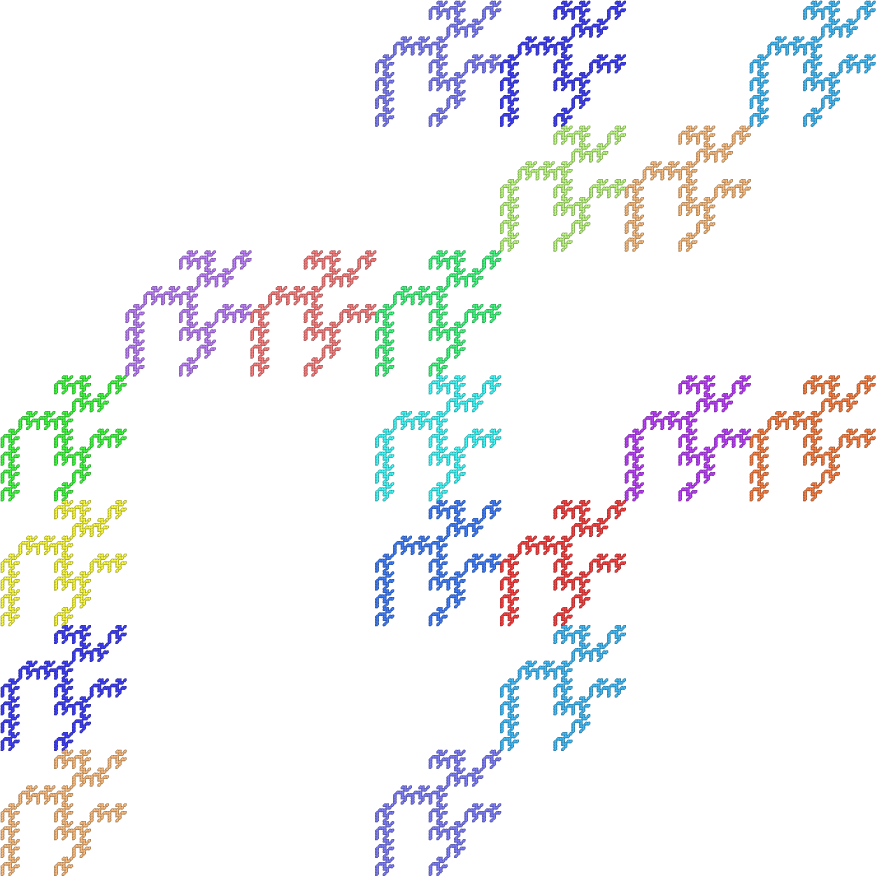}
    \includegraphics[width=0.22\textwidth]{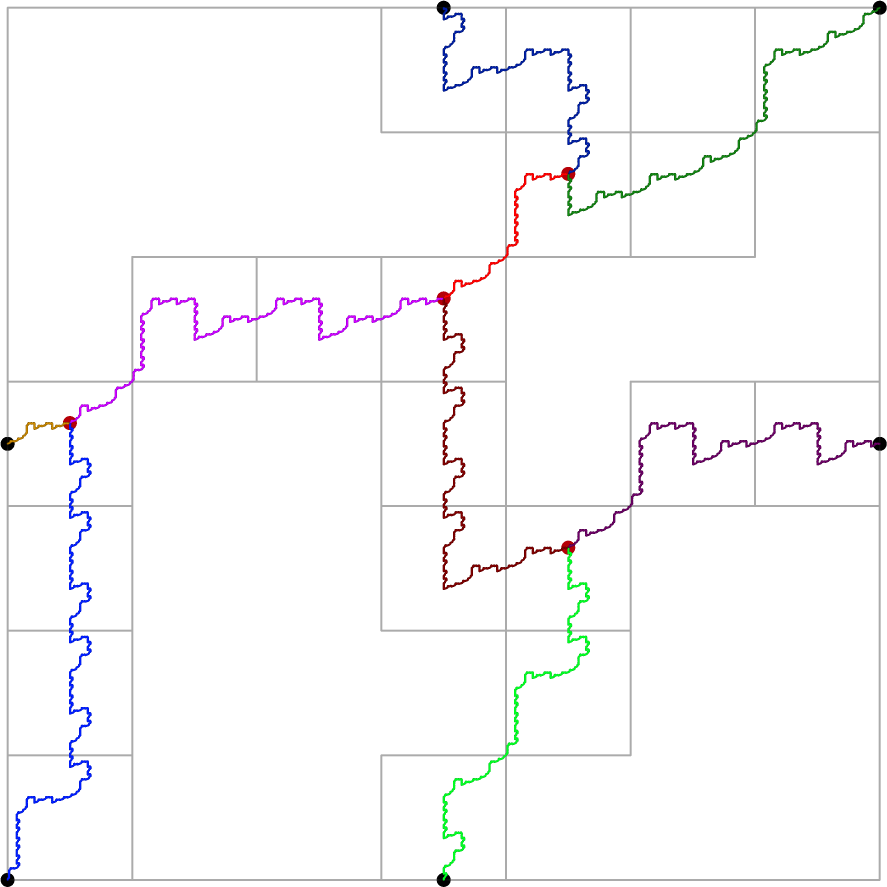}}
    \caption{Class 16 ({\bf 7-D6})}
    \label{fig:tree7}
\end{figure}

{\bf Acknowledgements:} The authors thank the reviewers for their careful reading, constructive comments, and suggestions. The authors are especially thankful to L.~L.~Cristea for her idea of labyrinth fractals and to H.~Rao whose question eventually motivated this research.\\

{\bf  Funding information:} The work is supported by the Mathematical Center in Akademgorodok under the
agreement no.075-15-2022-281 with the Ministry of Science and Higher Education of the Russian Federation.\\

{\bf Author contributions: }All authors have accepted responsibility for the entire content of this manuscript and approved its submission.\\

{\bf Conflict of interest:}  Authors state no conflict of interests.

\end{document}